\newcommand{\rev}{\ensuremath\mathrm{rev}}
\newtheorem{remark}[theorem]{Remark}
\newtheorem{example}[theorem]{Example}
\title{A note on eigenvector error bounds  for polynomial eigenvalue problems}
\author{Javier P\'{e}rez\footnotemark[1]}
\begin{document}
\maketitle
\slugger{simax}{xxxx}{xx}{x}{x--x}
\renewcommand{\thefootnote}{\fnsymbol{footnote}}
\footnotetext[1]{Department of Mathematical Sciences, University of Montana, USA. Email: {\tt javier.perez-alvaro@mso.umt.edu}.}

\renewcommand{\thefootnote}{\arabic{footnote}}

\begin{abstract}
The standard approach for finding the eigenvalues and the eigenvectors of a matrix polynomial starts by embedding the coefficients of the  polynomial into a matrix pencil, known as linearization.
Building on the pioneering work of Nakatsukasa and Tisseur, we present in this work error bounds for computed eigenvectors of matrix polynomials.
Our error bounds are  applicable to any linearization satisfying two properties.
First, eigenvectors of the original matrix polynomial can be recovered from those of the linearization without any arithmetic computation.
Second, the linearization presents one-sided factorizations, which relate the residual for the linearization with the residual for the polynomial.
Linearizations satisfying these two properties include the family of block Kronecker linearizations.
The error bounds imply that an eigenvector has been computed accurately when the residual norm is small, provided that the computed associated eigenvalue is well-separated from the rest of the spectrum of the linearization.
The theory is illustrated by numerical examples.
\end{abstract}
\begin{keywords}
polynomial eigenvalue problem, matrix polynomial, eigenvector, error bound, linearization, block Kronecker linearization
\end{keywords}
\begin{AMS}
15A18, 15A22, 65F15
\end{AMS}

\pagestyle{myheadings}
\thispagestyle{plain}

\section{Introduction}\label{sec:intro}
The \emph{polynomial eigenvalue problem} (hereafter PEP) associated with a regular\footnote{A matrix polynomial $P(\lambda)$ is regular if $\det P(\lambda)\not\equiv 0$.} \emph{matrix polynomial}
\begin{equation}\label{eq:poly}
P(\lambda) = \lambda^d A_d+\lambda^{d-1} A_{d-1}+\cdots+A_1\lambda + A_0,
\end{equation}
where the coefficients $A_i$ are $n\times n$ complex or real matrices, consists in finding scalars $\lambda$ and vectors $x$ such that
\begin{equation}\label{eq:PEP}
P(\lambda)x = 0.
\end{equation}
The scalar $\lambda$ is an \emph{eigenvalue} of $P(\lambda)$, and $x$ is the corresponding \emph{eigenvector}.
The pair $(\lambda,x)$ is known as an \emph{eigenpair} of $P(\lambda)$. 
Solving PEPs is an important task in scientific computation \cite{NEP}, and has received considerable attention in the last decades (see \cite[Chapter 12]{Volker} and the references therein).

The most common approach for solving the PEP \eqref{eq:PEP} starts by embedding the matrix coefficients $A_i$ into the coefficients of a larger matrix pencil\footnote{In this work, matrix pencil and pencil refer to a matrix polynomial of degree at most 1.} $L(\lambda)=A-\lambda B$, which has the same eigenvalues as the matrix polynomial $P(\lambda)$.
This matrix pencil is called a  \emph{linearization}.
The linearization replaces the PEP \eqref{eq:PEP} by a \emph{generalized eigenvalue problem} (GEP)
\[
(A-\lambda B)v = 0.
\]
So, the eigenvalues of the polynomial $P(\lambda)$  can  be obtained from the generalized Schur decomposition of the linearization \cite{QZ}
\[
Q(A-\lambda B)Z  = T_A - \lambda T_B,
\]
where $T_A$ and $T_B$ are upper triangular matrices, and $Q$ and $Z$ are unitary.
After the eigenvalues are calculated, the eigenvector of the linearization are computed by running a few steps of the inverse iteration.
In principle,  the eigenvectors of $P(\lambda)$ and the eigenvectors of the linearization $L(\lambda)$ need not be related in a simple way.
However, most linearizations  allow us to recover easily the eigenvectors of $P(\lambda)$ from those of $L(\lambda)$.
This approach for solving PEPs is followed, for instance, by the {\tt polyeig} command in MATLAB \cite{Matlab}.

Any numerical algorithm (based or not on linearization) for solving PEPs is affected by round-off errors due to the limitations of floating point arithmetic.
If $(\lambda,x)$ and $(\widetilde{\lambda},\widetilde{x})$ denote, respectively, exact and  computed eigenpairs of $P(\lambda)$, a main concern is quantifying the errors in the computed solutions:
\[
\frac{|\lambda-\widetilde{\lambda}|}{|\lambda|} \quad \quad \mbox{and} \quad \quad  \sin\angle(x,\widetilde{x})
\]
where  $\angle(u,w)$ denotes the acute angle between vectors $u$ and $v$ (see Section \ref{sec:acute angle} for its precise definition).

Upper bounds for the eigenvalue relative errors $|\lambda-\widetilde{\lambda}|/|\lambda|$ have been obtained  for linearizations in the vector space $\mathbb{DL}(P)$ in \cite{conditioning}, and for linearizations in the family of block Kronecker pencils  in \cite{conditioning Kronecker}.
These upper bounds are based on the fact that generalized eigensolvers like the QZ algorithm are backward stable.
The backward stability implies that the computed eigenvalues $\widetilde{\lambda}$ are the exact eigenvalues of 
\[
L(\lambda) + \Delta L(\lambda) = A+\Delta A-\lambda(B+\Delta B),
\]
where $\|\Delta A\|_2 \leq \mathcal{O}(u)\|A\|_2$ and $ \|\Delta B\|_2\leq \mathcal{O}(u)\|B\|_2$. 
Here $u$ denotes the unit round-off, and by $\mathcal{O}(u)$ we denote  any quantity that is upper bounded by $u$ times a modest constant.
Then, to first order in $u$, we have
\begin{equation}\label{eq:eigenvalue error}
\frac{|\lambda-\widetilde{\lambda}|}{|\lambda|}\leq \mathcal{O}\left(\kappa_L(\lambda)u\right) = \frac{\kappa_L(\lambda)}{\kappa_P(\lambda)}\cdot \mathcal{O}\left(\kappa_P(\lambda)u\right),
\end{equation}
where $\kappa_L(\lambda)$ and $\kappa_P(\lambda)$ denote the condition numbers of $\lambda$ as eigenvalue of $L(\lambda)$ and $P(\lambda)$, respectively, \cite{Tisseur}.
The ratio $\kappa_L(\lambda)/\kappa_P(\lambda)$ measures, then, how far is this approach from being forward stable.
Simple conditions for $\kappa_L(\lambda)/\kappa_P(\lambda)$ to be moderate have been identified in \cite{conditioning,conditioning Kronecker}.
Notice that under these conditions, the bound \eqref{eq:eigenvalue error} guarantees that the well-condition eigenvalues of $P(\lambda)$ (i.e., those eigenvalues of $P(\lambda)$ with $\kappa_P(\lambda)\approx 1$) can be computed with high relative accuracy if we apply a backward stable eigenvalue algorithm to the linearization.

Compared to the eigenvalue error bounds in \cite{conditioning,conditioning Kronecker}, not many results on the accuracy of computed eigenvectors---as measure by the acute angle $\angle(x,\widetilde{x})$---exist \cite{DT,MT,TN}.
 The difficulties stem from the fact that a linearization $L(\lambda)$ and the matrix polynomial $P(\lambda)$ do not have the same eigenvectors.
 Nonetheless, the eigenvectors of most of the linearizations introduced in the last years are closely related to those of the polynomial.
Inspired by \cite{TN}, this eigenvector property will allow us to derive simple eigenvector error bounds for many families of linearizations.
As the bounds obtained in \cite{TN}, our error bounds depend  on two quantities, the residual norm $\|P(\widetilde{\lambda})\widetilde{x}\|_2$, and the separation of $\widetilde{\lambda}$ to the eigenvalues of $L(\lambda)$ other than $\lambda$.
These bounds show that an eigenvector of a matrix polynomial $P(\lambda)$ can be computed accurately, provided that the associated eigenvalue is well-separated from the rest of the spectrum of the linearization.
In contrast to the bounds in \cite{TN}, our error bounds are valid essentially for all the linearizations available in the literature, since they do not required the Vandermonde-like structure of the eigenvectors of the linearization.
  
 We begin with Section \ref{sec:preliminaries} by recalling some basic concepts that are used throughout
the paper, followed in Section \ref{sec:Kronecker linearizations} by the  definition of block Kronecker linearizations. 
Section \ref{sec:error-bounds} derives error bounds for computed eigenvectors of matrix polynomials.
Then, Section \ref{sec:error-bounds-BKL} shows that the new error bounds generalize the error bound by Nakatsukasa and Tisseur \cite{TN} for the Frobenius companion form to the more general construction of block Kronecker linearization.  
Section \ref{sec:experiments} illustrates the main results by numerical examples. 

Throughout the paper we use the following notation.
 We denote by $I_n$ the $n\times n$ identity matrix, and by $0$ the matrix with all its entries equal to zero, whose  size should be clear from the context.
By $A\otimes B$, we denote the Kronecker product of two matrices $A$ and $B$. 
We denote by $\mathbb{C}[\lambda]$ the ring of polynomials in the variable $\lambda$ with complex coefficients.
 The set of $m \times n$ matrix polynomials, this is, the set of $m\times n$ matrices with entries in $\mathbb{C}[\lambda]$, is denoted by $\mathbb{C}[\lambda]^{m\times n}$.
We say that the polynomial $P(\lambda)$ in \eqref{eq:poly} has \emph{degree} $d$, when $A_d\neq 0$. 
Otherwise, we say that $P(\lambda)$ has \emph{grade} $d$.

\section{Preliminaries}\label{sec:preliminaries}

\subsection{Strong linearizations of matrix polynomials}\label{sec:Matrix Polynomials}


We begin by recalling the definition of strong linearization.
A \emph{linearization} of a matrix polynomial $P(\lambda)$ is a matrix pencil $L(\lambda)=A-\lambda B$ such that
\[
U(\lambda)(A-\lambda B)V(\lambda) = 
\begin{bmatrix}
P(\lambda) & 0 \\ 0 & I_{(d-1)n}
\end{bmatrix}
\]
for some unimodular matrices $U(\lambda)$ and $V(\lambda)$, i.e., $\det(U(\lambda))$ and $\det(V(\lambda))$ are nonzero constants independent of $\lambda$.
If a linearization $L(\lambda) = A-\lambda B$ of $P(\lambda)$ satisfies additionally
\[
W(\lambda)(\lambda A -B)Z(\lambda) = 
\begin{bmatrix}
\rev P(\lambda) & 0 \\ 0 & I_{(d-1)n}
\end{bmatrix},
\]
where $\rev P(\lambda):= \lambda^d A_0+\lambda^{d-1}A_1+\cdots + \lambda A_{d-1}+A_d$, for some unimodular matrices $W(\lambda)$ and $Z(\lambda)$, the linearization $L(\lambda)$ is said to be a \emph{strong linearization} of $P(\lambda)$.
We recall that the definition of strong linearization implies that  $P(\lambda)$ and $L(\lambda)$ have the same \emph{finite and infinite eigenvalues}, with the same algebraic and geometric multiplicities \cite{singular,Lancaster}.

The prototype strong linearization of a matrix polynomial $P(\lambda)$ as in \eqref{eq:poly} is the \emph{Frobenius companion linearization}
\begin{equation}\label{eq:Frob}
\begin{bmatrix}
\lambda A_d+A_{d-1} & A_{d-2} & \cdots & A_1 & A_0 \\
-I_n & \lambda I_n & 0 & \cdots & 0 \\
0 & -I_n & \lambda I_n & \ddots  & \vdots \\
\vdots & \ddots & \ddots & \ddots & 0 \\
0 & \cdots & 0 & -I_n & \lambda I_n
\end{bmatrix},
\end{equation}
which is the linearization used by the MATLAB command {\tt polyeig} \cite{Matlab}, and by the fast and stable polynomial eigenvalue solver \cite{fast}.
Many others linearizations have been introduced in the last decade (see, for example, \cite{Fiedler,kronecker,4m-vspace}) for different reasons, e.g., preservation of algebraic structures \cite{structured kronecker,GoodVibrations}, developing new robust and stable algorithms \cite{NLEIGS,TS-CORK,CORK AAA,Leo2,CORK}, linearizing polynomials in bases other than the monomials \cite{ACL09,Cheby1,Newton,Cheby2,Leo1}, etc.

All the linearizations known by the authors satisfy local right-sided factorizations, which are introduced in Definition \ref{def:local fact}.
\begin{definition}[local right-sided factorization]\label{def:local fact}
A linearization $L(\lambda)$ of a matrix polynomial $P(\lambda)$ is said to present a  \emph{right-sided factorization at $\Omega\subseteq \mathbb{C}$} if
\[
L(\lambda)H(\lambda) = g(\lambda)\otimes P(\lambda),
\]
for some matrix-valued function $H(\lambda)$ and some vector-valued function $g(\lambda)$, where for each $\lambda$ in $\Omega$, $H(\lambda)$ has full column rank and $g(\lambda)$ is nonzero.
The matrix-valued function  $H(\lambda)$ is called a \emph{right-sided factor at $\Omega$}.
\end{definition}

The right-sided factorizations in \cite[Equation (2.4)]{framework} are local right-sided factorizations, but not the other way around, since we do not require the entries of $H(\lambda)$ or $g(\lambda)$ to be polynomials in $\lambda$.

\begin{example} 
Consider for illustrative purposes the Frobenius companion form \eqref{eq:Frob} associated with a matrix polynomial $P(\lambda)$ as in \eqref{eq:poly} of degree $d=3$. 
We can easily check
\begin{equation}\label{eq:ex-factorization1}
\begin{bmatrix}
\lambda A_3 + A_2 & A_1 & A_0 \\
-I_n & \lambda I_n & 0 \\
0 & -I_n & \lambda I_n 
\end{bmatrix}
\begin{bmatrix}
\lambda^2 I_n \\ \lambda I_n \\ I_n
\end{bmatrix} =
\begin{bmatrix}
1 \\ 0 \\ 0
\end{bmatrix}\otimes P(\lambda).
\end{equation}
Notice that $\begin{bmatrix} \lambda^2 I_n & \lambda I_n & I_n \end{bmatrix}^T$ has full column rank at every $\lambda\in\mathbb{C}$.
Hence, \eqref{eq:ex-factorization1} is a right-sided factorization at $\mathbb{C}$.
Now, dividing both sides of \eqref{eq:ex-factorization1} by $\lambda^2$, yields
\begin{equation}\label{eq:ex-factorization2}
\begin{bmatrix}
\lambda A_3 + A_2 & A_1 & A_0 \\
-I_n & \lambda I_n & 0 \\
0 & -I_n & \lambda I_n 
\end{bmatrix}
\begin{bmatrix}
 I_n \\ \lambda^{-1} I_n \\ \lambda^{-2} I_n
\end{bmatrix} =
\begin{bmatrix}
\lambda^{-2} \\ 0 \\ 0
\end{bmatrix}\otimes P(\lambda),
\end{equation}
which is a right-sided factorization at $\mathbb{C}\setminus\{0\}$.
The factorizations \eqref{eq:ex-factorization1} and \eqref{eq:ex-factorization2} are easily generalized to any degree $d$.
\end{example}

A key feature of the eigenvector error bounds obtained in Section \ref{sec:error-bounds} is that they are applicable to any linearization satisfying the following two properties:

\medskip

\begin{itemize}
\item[\rm P1.] For each eigenvalue $\lambda_0$ of $P(\lambda)$, the linearization $\mathcal{L}(\lambda)$ presents a right-sided factorization at $\Omega$ of the form
\begin{equation}\label{eq:one-sided-fact}
\mathcal{L}(\lambda)H(\lambda) = g(\lambda)\otimes P(\lambda),
\end{equation}
for some complex region $\Omega$ that contains $\lambda$, and  some local right-sided factor $H(\lambda)$ at $\Omega$.

\item[\rm P2.] If we partition the local right-sided factor $H(\lambda)$ in \eqref{eq:one-sided-fact} into $d$ blocks of size $n\times n$ each, which we denote by $H_j(\lambda)$, for $j=1,\hdots,d$, then $H_i(\lambda)=I_n$, for some $i\in\{1,\hdots,n\}$.  
\end{itemize}

\medskip

Most linearizations satisfy properties P1 and P2.
Hence, the error bounds in Section \ref{sec:error-bounds} are applicable to a huge plethora of linearizations.
In particular, we will show in Section \ref{sec:Kronecker linearizations} that all the linearizations in the family of block Kronecker pencils satisfy properties P1 and P2.

An important consequence of properties P1 and P2 is that the eigenvectors of a linearization $L(\lambda)$ satisfying P1 and P2 are closely related with those of $P(\lambda)$. 
\begin{theorem}\label{theorem:exact-eigenpair}
Let $P(\lambda)$ be a matrix polynomial, and let $L(\lambda)$ be a linearization of $P(\lambda)$ satisfying properties {\rm P1} and {\rm P2}.
Then, $v$ is an eigenvector $L(\lambda)$ with eigenvalue $\lambda_0$ if and only if $v=H(\lambda_0)x$ for some eigenvector $x$ of $P(\lambda)$ with eigenvalue $\lambda_0$.
\end{theorem}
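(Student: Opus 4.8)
The plan is to prove the two implications separately, using the right-sided factorization \eqref{eq:one-sided-fact} as the bridge between the two eigenvalue problems. Fix an eigenvalue $\lambda_0$ of $P(\lambda)$, and let $H(\lambda)$ and $g(\lambda)$ be as in property P1, so that $\mathcal{L}(\lambda)H(\lambda) = g(\lambda)\otimes P(\lambda)$ holds on a region $\Omega \ni \lambda_0$, with $H(\lambda_0)$ of full column rank and $g(\lambda_0)\neq 0$.

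For the ``if'' direction, suppose $x$ is an eigenvector of $P(\lambda)$ with eigenvalue $\lambda_0$, so $P(\lambda_0)x = 0$ and $x \neq 0$. Evaluating the factorization at $\lambda_0$ and applying both sides to $x$ gives $\mathcal{L}(\lambda_0)H(\lambda_0)x = g(\lambda_0)\otimes P(\lambda_0)x = g(\lambda_0)\otimes 0 = 0$. Setting $v := H(\lambda_0)x$, we have $\mathcal{L}(\lambda_0)v = 0$, and $v \neq 0$ because $H(\lambda_0)$ has full column rank (hence trivial null space) and $x \neq 0$. So $v$ is an eigenvector of $\mathcal{L}(\lambda)$ with eigenvalue $\lambda_0$.

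For the ``only if'' direction, suppose $v$ is an eigenvector of $\mathcal{L}(\lambda)$ with eigenvalue $\lambda_0$. Here property P2 does the real work: partition $v$ conformally with the block structure of $H(\lambda)$ into $d$ blocks $v_1,\dots,v_d \in \mathbb{C}^n$, and let $i$ be the index for which $H_i(\lambda) = I_n$. Set $x := v_i$. The candidate identity is $v = H(\lambda_0)x$; to establish it, I would show that $v - H(\lambda_0)v_i$ lies in the null space of $\mathcal{L}(\lambda_0)$ while simultaneously having zero $i$-th block. The first point follows from the factorization: I need that $H(\lambda_0)v_i$ is also a null vector of $\mathcal{L}(\lambda_0)$, which reduces to showing $P(\lambda_0)v_i = 0$, i.e.\ that $x = v_i$ is (or extends to) an eigenvector of $P(\lambda)$. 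This is the crux, and the natural route is to invoke Theorem numbering from the strong-linearization theory: since $\lambda_0$ is an eigenvalue of $\mathcal{L}(\lambda)$, it is an eigenvalue of $P(\lambda)$ with the same geometric multiplicity, so $\ker \mathcal{L}(\lambda_0)$ has the same dimension as $\ker P(\lambda_0)$; the map $x \mapsto H(\lambda_0)x$ sends $\ker P(\lambda_0)$ injectively into $\ker \mathcal{L}(\lambda_0)$ by the ``if'' direction, hence it is a bijection by the dimension count. Therefore every null vector $v$ of $\mathcal{L}(\lambda_0)$ equals $H(\lambda_0)x$ for a unique $x \in \ker P(\lambda_0)$, and reading off the $i$-th block forces $x = v_i$.

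The main obstacle is the geometric-multiplicity / dimension-count argument: one must be careful that $H(\lambda_0)$ restricted to $\ker P(\lambda_0)$ really is injective (which needs full column rank of $H(\lambda_0)$, guaranteed by P1) and that the two kernels have equal dimension (which needs the linearization to be a strong linearization, or at least to share geometric multiplicities with $P$, as recalled after the definition of strong linearization). An alternative, more hands-on approach avoids the multiplicity count: work directly with the block rows of the identity $\mathcal{L}(\lambda_0)v = 0$ together with the block structure of $\mathcal{L}$ and $H$, and eliminate the non-$i$ blocks $v_j$ to express them in terms of $v_i$, matching them against $H_j(\lambda_0)v_i$; but this requires knowing more about the shape of $\mathcal{L}$ than P1 and P2 alone provide, so I would favor the multiplicity argument, which uses only the stated hypotheses plus standard strong-linearization facts.
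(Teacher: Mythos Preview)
Your proposal is correct and, once you settle on the dimension-counting argument, it is essentially the paper's own proof. The paper proves the ``if'' direction exactly as you do (it invokes P2 rather than full column rank of $H(\lambda_0)$ to see that $v\neq 0$, but either works), and for the ``only if'' direction it takes a basis $x_1,\dots,x_m$ of $\ker P(\lambda_0)$, observes that $H(\lambda_0)x_1,\dots,H(\lambda_0)x_m$ are linearly independent null vectors of $L(\lambda_0)$, and uses equality of geometric multiplicities to conclude they form a basis of $\ker L(\lambda_0)$---precisely your bijection argument. Your initial detour (trying to show directly that $v-H(\lambda_0)v_i$ vanishes) is not needed, and the paper does not attempt it; your final observation that $x=v_i$ is a true bonus not stated in the theorem.
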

\begin{proof}
Let $\lambda_0$ be an eigenvalue of $P(\lambda)$, and assume $L(\lambda)$ presents a right-sided factorization at $\Omega$ of the form \eqref{eq:one-sided-fact}, with $\lambda_0\in\Omega$.

Let $x$ be a right eigenvector of $P(\lambda)$ with eigenvalue $\lambda_0$, and let $v=H(\lambda_0)x$.
Property P2 implies that if the vector $x$ is nonzero, then so is $v$.
Evaluating \eqref{eq:one-sided-fact} at the eigenvalue $\lambda_0$, multiplying from the right by $x$, and using $P(\lambda_0)x=0$, yields 
\[
L(\lambda_0)H(\lambda_0)x = L(\lambda_0)v=g(\lambda_0)\otimes (P(\lambda_0)x) = 0.
\]
Hence, $v$ is an eigenvector of $L(\lambda)$ with  eigenvalue $\lambda_0$.

Assume $\lambda_0$ as an eigenvalue of $P(\lambda)$ has geometric multiplicity $m$, and let $v$ be any eigenvector of $L(\lambda)$ associated with $\lambda_0$.
Since $L(\lambda)$ is a linearization of $P(\lambda)$, the multiplicity of $\lambda_0$ as an eigenvalue of $L(\lambda)$ is also $m$. 
Let $x_1,\hdots,x_m$ be linearly independent eigenvectors of $P(\lambda)$ associated with $\lambda_0$, and let $v_i:=H(\lambda_0)x_i$, for $i=1,\hdots,m$. 
Since $H(\lambda_0)$ has full column rank, the vectors $v_1,\hdots,v_m$ are linearly independent.
In other words, the vectors $v_1,\hdots,v_m$ form a basis for the nullspace of $P(\lambda_0)$. 
Hence $v$ must be of the form
\[
v = \sum_{i=0}^m c_i v_i = \sum_{i=0}^m c_i H(\lambda_0)x_i = H(\lambda_0)\left( \sum_{i=0}^m c_ix_i\right) =: H(\lambda_0)x,
\]
as we wanted to show.
\end{proof}

Another important consequence of local right-sided factorizations, is that they allow us to relate residual norms for the linearization $L(\lambda)$ to residual norms for the matrix polynomial $P(\lambda)$.
\begin{theorem}\label{thm:residual norm}
Let $P(\lambda)$ be a matrix polynomial, and let $L(\lambda)$ be a linearization of $P(\lambda)$.
Assume $L(\lambda)$ satisfies a right-sided factorization at $\Omega$ of the form \eqref{eq:one-sided-fact}.
If $\lambda_0\in\Omega$, then
\[
\|P(\lambda_0)x\|_2 = \frac{\|L(\lambda_0)H(\lambda_0)x\|_2}{\|g(\lambda_0)\|_2},
\]
for any vector $x\in\mathbb{C}^n$.
\end{theorem}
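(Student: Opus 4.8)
The plan is to reduce the identity to a single evaluation of the right-sided factorization together with the multiplicativity of the Euclidean norm under Kronecker products. First I would evaluate \eqref{eq:one-sided-fact} at the point $\lambda_0\in\Omega$ and multiply both sides from the right by an arbitrary vector $x\in\mathbb{C}^n$, obtaining
\[
L(\lambda_0)H(\lambda_0)x = g(\lambda_0)\otimes\bigl(P(\lambda_0)x\bigr).
\]
Taking Euclidean norms on both sides then turns the whole statement into the assertion $\|L(\lambda_0)H(\lambda_0)x\|_2 = \|g(\lambda_0)\|_2\,\|P(\lambda_0)x\|_2$, which after dividing by $\|g(\lambda_0)\|_2$ is exactly the claim.

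The one fact that carries the argument is the norm identity $\|a\otimes b\|_2 = \|a\|_2\,\|b\|_2$ for a column vector $a\in\mathbb{C}^p$ and a column vector $b\in\mathbb{C}^q$. I would justify it by writing $a\otimes b$ as the stacked vector whose $j$-th block of length $q$ is $a_j b$, so that $\|a\otimes b\|_2^2 = \sum_{j=1}^p \|a_j b\|_2^2 = \Bigl(\sum_{j=1}^p |a_j|^2\Bigr)\|b\|_2^2 = \|a\|_2^2\,\|b\|_2^2$. Applying this with $a=g(\lambda_0)$ and $b=P(\lambda_0)x$ yields $\|g(\lambda_0)\otimes(P(\lambda_0)x)\|_2 = \|g(\lambda_0)\|_2\,\|P(\lambda_0)x\|_2$, which combined with the displayed equality above gives the result.

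The only point that needs to be checked for the statement to make sense is that the denominator $\|g(\lambda_0)\|_2$ is nonzero; this is immediate, since $\lambda_0\in\Omega$ and Definition~\ref{def:local fact} requires $g(\lambda)$ to be nonzero for every $\lambda\in\Omega$. I do not expect any genuine obstacle here: the proof is essentially a one-line computation once the Kronecker-product norm identity is recorded, and the result of Theorem~\ref{theorem:exact-eigenpair} is not even needed. If anything, the mildly delicate part is purely presentational, namely being explicit that the equality holds for \emph{every} $x$ (including $x$ not an eigenvector and $\lambda_0$ not necessarily an eigenvalue), so that the theorem can later be used to convert computed residuals for $L(\lambda)$ into residuals for $P(\lambda)$.
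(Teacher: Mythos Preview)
Your proof is correct and follows essentially the same route as the paper: evaluate the right-sided factorization at $\lambda_0$, multiply by $x$, take norms, and invoke the Kronecker-product norm identity $\|a\otimes b\|_2=\|a\|_2\|b\|_2$. The only difference is cosmetic---you spell out the vector case of the norm identity and explicitly note that $g(\lambda_0)\neq 0$, both of which the paper leaves implicit.
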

\begin{proof}
Multiplying \eqref{eq:one-sided-fact} from the right by the vector $x$ and evaluating at $\lambda=\lambda_0$, gives $L(\lambda_0)H(\lambda_0)x = g(\lambda_0)\otimes (P(\lambda_0)x)$.
Then, taking norms on both sides of the equation, and using $\|A\otimes B\|_2 = \|A\|_2\|B\|_2$, for any matrices $A$ and $B$, the result is readily established.
\end{proof}

\subsection{The acute angle between two vectors}\label{sec:acute angle}

The acute angle between two vectors $u,w\in\mathbb{C}^n$ is given by
\begin{equation}\label{eq:def-angle}
 \angle(u,w) := \mathrm{arccos}\left( \frac{|w^*u|}{\|u\|_2\|w\|_2}\right).
\end{equation}

Lemma \ref{lemma:angle-char}, which can also be found in \cite{q-Ritz}, gives a simple variational characterization of  $\sin \angle(u,w)$.
This is an elementary result, and we present its proof for completeness. 
\begin{lemma}\label{lemma:angle-char}
Let $u,w\in\mathbb{C}^n$. 
Then
\[
\sin\angle(u,w)=\min_{\alpha\in\mathbb{C}}\Big{\|}\frac{u}{\|u\|_2}-\alpha w \Big{\|}_2.
\]
\end{lemma}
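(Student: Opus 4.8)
The plan is to reduce the minimization over $\alpha \in \mathbb{C}$ to an elementary calculus/geometry computation. Without loss of generality I would normalize by setting $\hat{u} := u/\|u\|_2$, so the quantity to minimize becomes $f(\alpha) := \|\hat{u} - \alpha w\|_2^2$, and I would minimize the square, since $t \mapsto \sqrt{t}$ is increasing on $[0,\infty)$. Expanding the Hermitian norm gives $f(\alpha) = 1 - \bar{\alpha}\,w^*\hat{u} - \alpha\,\hat{u}^*w + |\alpha|^2\|w\|_2^2$. This is a real-valued quadratic in the real and imaginary parts of $\alpha$; I would complete the square (treating $\alpha$ and $\bar\alpha$ formally as independent, or equivalently differentiating with respect to $\mathrm{Re}\,\alpha$ and $\mathrm{Im}\,\alpha$) to find that the minimizer is $\alpha_{\min} = (w^*\hat{u})/\|w\|_2^2$.

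Substituting $\alpha_{\min}$ back into $f$, the cross terms and the quadratic term collapse and I expect to obtain
\[
f(\alpha_{\min}) = 1 - \frac{|w^*\hat{u}|^2}{\|w\|_2^2} = 1 - \frac{|w^*u|^2}{\|u\|_2^2\,\|w\|_2^2}.
\]
By the definition \eqref{eq:def-angle}, the fraction on the right is exactly $\cos^2 \angle(u,w)$, so $f(\alpha_{\min}) = 1 - \cos^2\angle(u,w) = \sin^2\angle(u,w)$. Taking square roots (both sides are nonnegative, and $\sin\angle(u,w)\ge 0$ since $\angle(u,w)\in[0,\pi/2]$) yields $\min_{\alpha}\|\hat{u}-\alpha w\|_2 = \sin\angle(u,w)$, which is the claim. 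I should also dispose of the degenerate case $w = 0$: then $\angle(u,w)$ would involve division by zero in \eqref{eq:def-angle}, so I would simply assume $w \neq 0$ (as is implicit in the statement), and likewise $u \neq 0$.

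There is no real obstacle here — the result is genuinely elementary — but the one point requiring a little care is the minimization over a \emph{complex} scalar rather than a real one: one must be careful that $f$ is a function of $\alpha$ through both $\alpha$ and $\bar\alpha$, so the cleanest route is to write $\alpha = re^{i\theta}$ or $\alpha = s + it$ and verify the quadratic in two real variables has the stated unique minimizer, or alternatively to note that $\hat u - \alpha w$ is the residual of the least-squares projection of $\hat u$ onto $\mathrm{span}\{w\}$ and invoke the orthogonality principle ($\hat u - \alpha_{\min} w \perp w$) to identify $\alpha_{\min}$ directly. Either way the computation is short, and the final identification of $1 - \cos^2$ with $\sin^2$ is immediate from \eqref{eq:def-angle}.
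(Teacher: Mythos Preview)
Your proposal is correct and follows essentially the same approach as the paper: identify the minimizer $\alpha_{\min}=w^*u/(\|u\|_2\|w\|_2^2)$, substitute back, and recognize the resulting expression $1-|w^*u|^2/(\|u\|_2^2\|w\|_2^2)$ as $\sin^2\angle(u,w)$ via the definition. You are a bit more careful than the paper about the complex minimization and the degenerate case $w=0$, but the argument is the same.
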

\begin{proof}
The minimum of $\| u/\|u\|_2-\alpha w \|_2$ is attained at $\alpha=w^*u/(\|u\|_2\|w\|_2^2)$. 
Hence, notice
\begin{align*}
\Big{\|}\frac{u}{\|u\|_2}- \frac{w^*u}{\|u\|_2\|w\|_2^2} w \Big{\|}_2^2 &= \left( \frac{u^*}{\|u\|_2}- \frac{u^*w}{\|u\|_2\|w\|_2^2} w^* \right) \left( \frac{u}{\|u\|_2}- \frac{w^*u}{\|u\|_2\|w\|_2^2} w \right)\\
&=1 - \frac{|w^*u|^2}{\|u\|_2^2\|w\|_2^2} = 1 - \cos^2\angle(u,w)=\sin^2\angle(u,w),
\end{align*}
as we wanted to show.
\end{proof}

As a consequence of Lemma \ref{lemma:angle-char}, we obtain in Theorem \ref{thm:bound-angle} a result on the sine of the acute angle between two block vectors.
This result is a straightforward generalization of \cite[Lemma 2.3]{q-Ritz}.
\begin{theorem}\label{thm:bound-angle}
Let $u=\left[\begin{smallmatrix} u_1 \\ \vdots \\ u_d \end{smallmatrix}\right]$ and $w=\left[\begin{smallmatrix} w_1 \\ \vdots \\ w_d \end{smallmatrix}\right]$ be two block-vectors with  block-entries $u_j,w_j\in\mathbb{C}^n$, for $j=1,\hdots,d$.
If $\|u_i\|_2=\|w_i\|_2=1$, for some $i\in\{1,\hdots,d\}$, then $\sin \angle (u_i,w_i) \leq \min\{ \|u\|_2,\|w\|_2\} \sin \angle(u,w)$.
\end{theorem}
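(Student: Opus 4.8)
The plan is to reduce everything to the variational characterization in Lemma \ref{lemma:angle-char} together with the trivial fact that the Euclidean norm of a block vector dominates the norm of each of its blocks. Concretely, I would start from
\[
\sin\angle(u,w) = \min_{\alpha\in\mathbb{C}} \Big\| \frac{u}{\|u\|_2} - \alpha w \Big\|_2,
\]
and observe that for every $\alpha\in\mathbb{C}$ the $i$-th block-entry of the vector $u/\|u\|_2 - \alpha w$ is exactly $u_i/\|u\|_2 - \alpha w_i$, so that
\[
\Big\| \frac{u}{\|u\|_2} - \alpha w \Big\|_2 \;\geq\; \Big\| \frac{u_i}{\|u\|_2} - \alpha w_i \Big\|_2 \;=\; \frac{1}{\|u\|_2}\big\| u_i - (\alpha\|u\|_2) w_i \big\|_2 .
\]

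The next step is the change of variable $\beta = \alpha\|u\|_2$: as $\alpha$ runs over $\mathbb{C}$, so does $\beta$, and since $\|u_i\|_2 = 1$ we may invoke Lemma \ref{lemma:angle-char} again (now applied to the pair $u_i,w_i$ in $\mathbb{C}^n$) to get $\|u_i - \beta w_i\|_2 \geq \sin\angle(u_i,w_i)$ for every $\beta$. Chaining these inequalities and then minimizing the left-hand side over $\alpha$ yields
\[
\sin\angle(u,w) \;\geq\; \frac{1}{\|u\|_2}\,\sin\angle(u_i,w_i),
\]
i.e.\ $\sin\angle(u_i,w_i) \leq \|u\|_2 \sin\angle(u,w)$.

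Finally, since $\angle(\cdot,\cdot)$ is symmetric in its two arguments and the hypotheses are symmetric in $u$ and $w$ (both $u_i$ and $w_i$ are unit vectors), the same argument with the roles of $u$ and $w$ interchanged gives $\sin\angle(u_i,w_i) \leq \|w\|_2 \sin\angle(u,w)$, and taking the smaller of the two bounds finishes the proof. I do not anticipate a real obstacle here; the only point that needs a moment's care is the rescaling that lets one pass from the minimization defining $\sin\angle(u,w)$ to the one defining $\sin\angle(u_i,w_i)$, and the bookkeeping of the factor $1/\|u\|_2$.
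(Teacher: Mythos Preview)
Your proof is correct and follows essentially the same route as the paper's: both start from the variational characterization in Lemma~\ref{lemma:angle-char}, drop all block components except the $i$-th, rescale to pull out the factor $1/\|u\|_2$, apply Lemma~\ref{lemma:angle-char} again using $\|u_i\|_2=1$, and then invoke symmetry to obtain the bound with $\|w\|_2$. The only cosmetic difference is that the paper writes the squared norm as an explicit sum over blocks before dropping terms, whereas you appeal directly to the subvector-norm inequality.
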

\begin{proof}
From Lemma \ref{lemma:angle-char}, we have
\begin{align*}
\sin^2\angle (u,w) =& \min_{\alpha\in\mathbb{C}}\Big{\|}\frac{u}{\|u\|_2}-\alpha w \Big{\|}_2^2 =\\ & \min_{\alpha\in\mathbb{C}}\left(\sum_{j\neq i}\Big{\|}\frac{u_j}{\|u\|_2}-\alpha w_j \Big{\|}_2^2 + \Big{\|}\frac{u_i}{\|u\|_2}-\alpha w_i \Big{\|}_2^2\right) \geq \\
 & \min_{\alpha\in\mathbb{C}} \Big{\|}\frac{u_i}{\|u\|_2}-\alpha w_i \Big{\|}_2^2 = \frac{1}{\|u\|_2^2} \sin^2\angle(u_i,w_i).
\end{align*}
Hence, $\sin\angle(u_i,v_i)\leq \|u\|_2\sin\angle(u,v)$.
Since $\sin^2\angle(u,w)=\sin^2\angle (w,u)$, we can analogously  obtain  $\sin\angle(u_i,v_i)\leq \|w\|_2\sin\angle(u,v)$.
Then, the desired result readily follows.
\end{proof}

\subsection{Eigenvector error bounds for generalized eigenvalue problems}\label{sec:GEP}
We recall in Theorem \ref{thm:gap} a well-known result on the sine of the acute angle between a given vector and an eigenvector of a matrix pencil. 
The quantity known as the \emph{separation}, that is,
\begin{equation}\label{eq:def-sep}
\mathrm{sep}\,(\lambda,(A,B)) := \| (A-\lambda B)^{-1} \|_2^{-1}=\sigma_{\rm min}(A-\lambda B),
\end{equation}
plays a key role.
The separation \eqref{eq:def-angle} can be seen as a rough measure of the distance from $\lambda$ to the spectrum of the pencil $A-\lambda B$.
We present the proof of Theorem \ref{thm:gap} for completeness.

\begin{theorem}\label{thm:gap}
Let $L(\lambda)=A-\lambda B$ be a regular matrix pencil, let $(\lambda_0,v)$ be an eigenpair of $L(\lambda)$ and let $(\widetilde{\lambda}_0,\widetilde{v})$ be a given pair, where $\widetilde{\lambda}_0$ is not an eigenvalue of $L(\lambda)$.
Suppose $L(\lambda)$ has a generalized Schur form
\[
Q^*BZ=\begin{bmatrix} \alpha & b^* \\ 0 & B_1 \end{bmatrix} \quad \mbox{and} \quad 
Q^*AZ=\begin{bmatrix} \beta & a^* \\ 0 & A_1 \end{bmatrix} \quad \mbox{with} \quad \lambda_0 = \frac{\beta}{\alpha},
\] 
where $Q=\begin{bmatrix} q_1 & Q_2 \end{bmatrix}$ and $Z=\begin{bmatrix} z_1 & Z_2 \end{bmatrix}$ are unitary with $z_1=v/\|v\|_2$, and  $A_1$ and $B_1$ are upper triangular.
Then
\[
\sin\angle(v,\widetilde{v})\leq \dfrac{\| L(\widetilde{\lambda}_0)\widetilde{v}\|_2}{\|\widetilde{v}\|_2\,\mathrm{sep}\,(\widetilde{\lambda}_0,(A_1,B_1))},
\]
where $\mathrm{sep}(\lambda,(A,B))$ has been defined in \eqref{eq:def-sep}.
\end{theorem}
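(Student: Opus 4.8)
The plan is to reduce everything to a one-dimensional deflation argument on the generalized Schur form and then apply the variational characterization of $\sin\angle$ from Lemma \ref{lemma:angle-char}. First I would write $\widetilde v$ in the coordinates supplied by the unitary $Z$: since $z_1 = v/\|v\|_2$, any vector decomposes as $\widetilde v = \gamma z_1 + Z_2 w$ for a scalar $\gamma$ and a vector $w$, and $Z_2 w$ is exactly the component of $\widetilde v$ orthogonal to $v$. Using the definition \eqref{eq:def-angle} one checks that $\sin\angle(v,\widetilde v) = \|Z_2 w\|_2/\|\widetilde v\|_2 = \|w\|_2/\|\widetilde v\|_2$, because $Z_2$ has orthonormal columns. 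So the goal becomes bounding $\|w\|_2$ in terms of the residual $\|L(\widetilde\lambda_0)\widetilde v\|_2$.

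Next I would multiply the residual by $Q^*$ on the left (which does not change its $2$-norm) and use the Schur form. Writing $L(\widetilde\lambda_0) = A - \widetilde\lambda_0 B$, we get
\[
Q^* L(\widetilde\lambda_0)\widetilde v = \big(Q^*AZ - \widetilde\lambda_0\, Q^*BZ\big) Z^*\widetilde v
= \begin{bmatrix} \beta - \widetilde\lambda_0\alpha & (a - \widetilde\lambda_0 b)^* \\ 0 & A_1 - \widetilde\lambda_0 B_1 \end{bmatrix}\begin{bmatrix} \gamma \\ w \end{bmatrix}.
\]
The point of the block-triangular structure is that the bottom block row of this product equals $(A_1 - \widetilde\lambda_0 B_1) w$, and it is insensitive to the scalar $\gamma$. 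Hence
\[
\|L(\widetilde\lambda_0)\widetilde v\|_2 \;\geq\; \big\|(A_1 - \widetilde\lambda_0 B_1) w\big\|_2 \;\geq\; \sigma_{\min}(A_1 - \widetilde\lambda_0 B_1)\,\|w\|_2 = \mathrm{sep}(\widetilde\lambda_0,(A_1,B_1))\,\|w\|_2,
\]
using the definition \eqref{eq:def-sep}. Here I should note that $\widetilde\lambda_0$ not being an eigenvalue of $L(\lambda)$ guarantees $A_1 - \widetilde\lambda_0 B_1$ is nonsingular, so $\mathrm{sep}(\widetilde\lambda_0,(A_1,B_1)) > 0$ and the division is legitimate. (Equivalently, one can phrase this last inequality via Lemma \ref{lemma:angle-char} by minimizing over the $z_1$-component, which is precisely the role of $\gamma$.)

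Combining the two displays gives $\|w\|_2 \leq \|L(\widetilde\lambda_0)\widetilde v\|_2 / \mathrm{sep}(\widetilde\lambda_0,(A_1,B_1))$, and dividing by $\|\widetilde v\|_2$ together with the identity $\sin\angle(v,\widetilde v) = \|w\|_2/\|\widetilde v\|_2$ yields the claimed bound. The only genuinely delicate point is the first step — verifying that the orthogonal-complement component of $\widetilde v$ relative to $v$ has norm exactly $\|\widetilde v\|_2 \sin\angle(v,\widetilde v)$, i.e., correctly handling the complex phase in \eqref{eq:def-angle}; everything after that is bookkeeping with the Schur form and the submultiplicativity $\|Mx\|_2 \geq \sigma_{\min}(M)\|x\|_2$.
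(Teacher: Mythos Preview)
Your proof is correct and follows essentially the same approach as the paper: decompose $\widetilde v$ in the $Z$-basis, identify $\sin\angle(v,\widetilde v)$ with the norm of the $Z_2$-component divided by $\|\widetilde v\|_2$, apply the block-triangular Schur form after premultiplying by $Q^*$ to isolate $(A_1-\widetilde\lambda_0 B_1)w$ in the bottom block, and invoke the separation. The only cosmetic difference is that the paper writes $\phi=(A_1-\widetilde\lambda_0 B_1)^{-1}s$ and bounds via $\|(A_1-\widetilde\lambda_0 B_1)^{-1}\|_2$, whereas you use the equivalent inequality $\|Mx\|_2\geq\sigma_{\min}(M)\|x\|_2$ directly.
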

\begin{proof}
Let
\[
\begin{bmatrix}
\eta \\ \phi
\end{bmatrix} = 
\begin{bmatrix}
z_1^* \\ Z_2^*
\end{bmatrix}\widetilde{v} \quad \mbox{and} \quad
\begin{bmatrix}
\sigma \\ s
\end{bmatrix} =
\begin{bmatrix}
q_1^* \\ Q_2^*
\end{bmatrix}\mathcal{L}(\widetilde{\lambda}_0)\widetilde{v}.
\]
Since $\mathrm{span}\{v\}=\mathrm{span}\{z\}$, we have $\sin\angle(v,\widetilde{v})=\|Z_2^* \widetilde{v}\|_2/\|\widetilde{v}\|_2=\|\phi\|_2/\|\widetilde{v}\|_2$. 
Then, notice
\[
\begin{bmatrix}
\sigma \\ s
\end{bmatrix} =
 Q^*\mathcal{L}(\widetilde{\lambda}_0)Z \begin{bmatrix}
z_1^* \\ Z_2^*
\end{bmatrix}\widetilde{v} = \left( \begin{bmatrix} \beta & a^* \\ 0 & A_1 \end{bmatrix}-\widetilde{\lambda}_0 \begin{bmatrix} \alpha & b^* \\ 0 & B_1 \end{bmatrix} \right)\begin{bmatrix}
\eta \\ \phi
\end{bmatrix},
\] 
which implies $s=(A_1-\widetilde{\lambda}_0 B_1)\phi$.
By assumption, the matrix $A_1-\widetilde{\lambda}_0 B_1$ is nonsingular.
Thus,  $\phi=(A_1- \widetilde{\lambda}_0 B_1)^{-1}s$.
Finally, by the unitary invariance of the spectral norm, we obtain
\begin{align*}
\sin\angle(v,\widetilde{v}) &= \frac{\|\phi\|_2}{\|\widetilde{v}\|_2} \leq \|(A_1-\widetilde{\lambda}_0 B_1)^{-1}\|_2\frac{\|s\|_2}{\|\widetilde{v}\|_2} =
\frac{\|s\|_2}{\|\widetilde{v}\|_2\,\mathrm{sep}\,(\widetilde{\lambda}_0,(A_1,B_1))} \\
&\leq 
\frac{\big{\|} \begin{bmatrix} \sigma \\ s \end{bmatrix}\big{\|_2}}{\|\widetilde{\lambda}_0\|_2\,\mathrm{sep}\,(\widetilde{\lambda}_0,(A_1,B_1))}=
\frac{\| L(\widetilde{\lambda}_0)\widetilde{v} \|_2}{\|\widetilde{v}\|_2\,\mathrm{sep}\,(\widetilde{\lambda}_0,(A_1,B_1))},
\end{align*}
as we wanted to show.
\end{proof}

The quantity $\mathrm{sep}\,(\widetilde{\lambda}_0,(A_1,B_1))$  in Theorem \ref{thm:gap} is usually interpreted as the distance from $\widetilde{\lambda}_0$ to the set of eigenvalues of $L(\lambda)$ other than $\lambda_0$.

\section{Block Kronecker pencils}\label{sec:Kronecker linearizations}

We recall in this section the family of (square) block Kronecker pencils, introduced recently in \cite{kronecker}.

We begin with the auxiliary matrix polynomials that appear repeatedly throughout the following developments:
\[
L_k(\lambda):=\begin{bmatrix}
-1 & \lambda & 0 & \cdots & 0 \\
0 & -1 & \lambda & \ddots & \vdots \\
\vdots & \ddots & \ddots & \ddots & 0 \\
0 & \cdots & 0 & -1 & \lambda
\end{bmatrix}\in\mathbb{C}[\lambda]^{k\times (k+1)} \quad \quad \mbox{and} \quad \quad
\Lambda_k(\lambda):= \begin{bmatrix}
\lambda^k \\ \vdots \\ \lambda \\ 1
\end{bmatrix}.
\]

\begin{definition}[Block Kronecker pencil]
An \emph{$(\epsilon,n,\eta,n)$-block Kronecker pencil}, or, simply, \emph{a block Kronecker pencil}, is a pencil of the form
\begin{equation}\label{eq:block Kronecker pencil}
\mathcal{L}(\lambda)=\left[ \begin{array}{c|c}
M(\lambda) & L_\eta(\lambda)^T\otimes I_n \\ \hline
L_\epsilon(\lambda)\otimes I_n & 0
\end{array}\right],
\end{equation}
where $M(\lambda)$ is arbitrary.
\end{definition}

A block Kronecker pencil \eqref{eq:block Kronecker pencil} is always a strong linearization of a certain $n\times n$ matrix polynomial.
\begin{theorem}[\cite{kronecker}]\label{thm:kronecker pencils}
The block Kronecker pencil \eqref{eq:block Kronecker pencil} is a strong linearization of the matrix polynomial
\begin{equation}\label{eq:Q}
Q(\lambda):=(\Lambda_\eta(\lambda)^T\otimes I_n)M(\lambda)(\Lambda_\epsilon(\lambda)\otimes I_n),
\end{equation}
considered as a matrix polynomial of grade $\epsilon+\eta+1$.
\end{theorem}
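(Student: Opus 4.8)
The plan is to verify the two defining conditions of a strong linearization for the pencil \eqref{eq:block Kronecker pencil} directly, by producing explicit unimodular matrix polynomials that reduce $\mathcal{L}(\lambda)$ and its reversal to the canonical forms of Section~\ref{sec:Matrix Polynomials}. (Alternatively one may quote the result from \cite{kronecker} or invoke the general theory of strong block minimal bases pencils, of which \eqref{eq:block Kronecker pencil} is the prototype; I nonetheless prefer the self-contained argument below.)

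First I would treat the linearization property. Since $L_\epsilon(\lambda)\Lambda_\epsilon(\lambda)=0$ and the submatrix of $L_\epsilon(\lambda)$ formed by its first $\epsilon$ columns is upper triangular with all diagonal entries equal to $-1$ (hence unimodular with polynomial inverse $N_\epsilon(\lambda)$), the matrix polynomial $R_\epsilon(\lambda)\in\mathbb{C}[\lambda]^{(\epsilon+1)\times(\epsilon+1)}$ whose first column is $\Lambda_\epsilon(\lambda)$ and whose remaining columns are $\left[\begin{smallmatrix}N_\epsilon(\lambda)\\ 0\end{smallmatrix}\right]$ is unimodular and satisfies $L_\epsilon(\lambda)R_\epsilon(\lambda)=\left[\,0\ \ I_\epsilon\,\right]$. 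An analogous (transposed) construction produces a unimodular $S_\eta(\lambda)\in\mathbb{C}[\lambda]^{(\eta+1)\times(\eta+1)}$ whose last row is $\Lambda_\eta(\lambda)^T$ and which satisfies $S_\eta(\lambda)L_\eta(\lambda)^T=\left[\begin{smallmatrix}I_\eta\\ 0\end{smallmatrix}\right]$. Multiplying $\mathcal{L}(\lambda)$ on the left by $\mathrm{diag}\!\left(S_\eta(\lambda)\otimes I_n,\,I_{\epsilon n}\right)$ and on the right by $\mathrm{diag}\!\left(R_\epsilon(\lambda)\otimes I_n,\,I_{\eta n}\right)$ --- both unimodular --- turns the two off-diagonal blocks of \eqref{eq:block Kronecker pencil} into $\left[\begin{smallmatrix}I_{\eta n}\\ 0\end{smallmatrix}\right]$ and $\left[\,0\ \ I_{\epsilon n}\,\right]$, and the $(1,1)$ block into $(S_\eta(\lambda)\otimes I_n)M(\lambda)(R_\epsilon(\lambda)\otimes I_n)$. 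Because the last block row of $S_\eta(\lambda)\otimes I_n$ is $\Lambda_\eta(\lambda)^T\otimes I_n$ and the first block column of $R_\epsilon(\lambda)\otimes I_n$ is $\Lambda_\epsilon(\lambda)\otimes I_n$, the $n\times n$ corner sub-block of this $(1,1)$ block is exactly the polynomial $Q(\lambda)$ of \eqref{eq:Q}. A short sequence of unimodular block row and column operations, using the two identity blocks as pivots, then annihilates every remaining sub-block, and a final constant block permutation brings the matrix to $\mathrm{diag}\!\left(Q(\lambda),\,I_{(\epsilon+\eta)n}\right)$. Since $\epsilon+\eta=(\epsilon+\eta+1)-1$, this says precisely that $\mathcal{L}(\lambda)$ is a linearization of $Q(\lambda)$, considered as a matrix polynomial of grade $\epsilon+\eta+1$.

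Next I would obtain the \emph{strong} part by applying the previous paragraph to $\rev\mathcal{L}(\lambda)$. Writing $F_k$ for the $k\times k$ reversal permutation, one has $\rev L_k(\lambda)=-F_kL_k(\lambda)F_{k+1}$ and $\rev\Lambda_k(\lambda)=F_{k+1}\Lambda_k(\lambda)$ (the latter with $\Lambda_k$ regarded with grade $k$). Hence, after conjugating $\rev\mathcal{L}(\lambda)$ by appropriate constant block permutations and adjusting signs, $\rev\mathcal{L}(\lambda)$ is again an $(\epsilon,n,\eta,n)$-block Kronecker pencil, now with middle block $\widetilde{M}(\lambda)=(F_{\eta+1}\otimes I_n)\,\rev M(\lambda)\,(F_{\epsilon+1}\otimes I_n)$. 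By the first paragraph, $\rev\mathcal{L}(\lambda)$ is therefore a linearization of $(\Lambda_\eta(\lambda)^T\otimes I_n)\widetilde{M}(\lambda)(\Lambda_\epsilon(\lambda)\otimes I_n)$; and using $\Lambda_k(\lambda)^TF_{k+1}=(\rev\Lambda_k(\lambda))^T$ together with the identity $\rev Q(\lambda)=\big((\rev\Lambda_\eta(\lambda))^T\otimes I_n\big)\,\rev M(\lambda)\,\big(\rev\Lambda_\epsilon(\lambda)\otimes I_n\big)$, one checks that this polynomial is exactly $\rev Q(\lambda)$. This is precisely the extra requirement in the definition of strong linearization, completing the proof.

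The routine ingredients are the two explicit unimodular completions $R_\epsilon$ and $S_\eta$ and the block eliminations, which are mechanical once the identity pivots are in place. The step I expect to demand the most care is the reversal argument: one must select the conjugating permutations and signs so that $\rev\mathcal{L}(\lambda)$ is identified with a genuine block Kronecker pencil and so that its associated polynomial emerges exactly as $\rev Q(\lambda)$ rather than merely as a polynomial equivalent to it --- in particular, one must check that the four sign choices forced by matching the two off-diagonal blocks leave the $(1,1)$ block with the correct sign.
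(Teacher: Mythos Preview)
The paper does not supply a proof of this theorem; it is quoted verbatim from \cite{kronecker} and attributed there. So there is no ``paper's own proof'' to match, and your self-contained argument is not required to track any particular route.

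Your proposal is correct. The construction of the unimodular completions $R_\epsilon(\lambda)$ and $S_\eta(\lambda)$ is sound (the key points being $L_\epsilon(\lambda)\Lambda_\epsilon(\lambda)=0$ and the unit-triangularity of the leading $\epsilon\times\epsilon$ submatrix of $L_\epsilon(\lambda)$), the subsequent block elimination is routine once the identity pivots are in place, and the reversal argument via the flip identities $\rev L_k(\lambda)=-F_kL_k(\lambda)F_{k+1}$ and $\rev\Lambda_k(\lambda)=F_{k+1}\Lambda_k(\lambda)$ is the standard one and does yield $\rev Q(\lambda)$ exactly after the signs are tracked. Your own caveat about the signs in the reversal step is well placed but not an obstacle: the two $-F_k$ factors coming from the off-diagonal blocks can be absorbed into the constant unimodular conjugation, and the $(1,1)$ block then carries no stray sign.

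For comparison, the original proof in \cite{kronecker} proceeds at a higher level of abstraction: it introduces the class of \emph{strong block minimal bases pencils}, proves once and for all (using properties of dual minimal bases) that any such pencil is a strong linearization of the polynomial obtained by compressing the $(1,1)$ block against dual bases of the off-diagonal blocks, and then observes that $L_k(\lambda)\otimes I_n$ and $\Lambda_k(\lambda)^T\otimes I_n$ form such a dual pair. That approach buys generality --- it covers, for instance, linearizations in non-monomial bases with no extra work --- whereas your explicit unimodular reduction is more elementary and closer in spirit to the classical companion-form arguments, at the cost of being tailored to this specific $L_k$.
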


In the next section, we recall how to use block Kronecker pencils to construct strong linearizations of a prescribed matrix polynomial.

\subsection{Block Kronecker linearizations}
In this section, we are given an  $n\times n$ matrix polynomial $P(\lambda)$ of degree $d$, and our goal is to obtain strong linearizations for $P(\lambda)$ from the family of block Kronecker pencils.

First, we write $d=\epsilon+\eta+1$, for some nonnegative integers $\epsilon$ and $\eta$.
Then, by Theorem \ref{thm:kronecker pencils}, we see that in order for a block Kronecker pencil as in \eqref{eq:block Kronecker pencil} to be a strong linearization of $P(\lambda)$, it needs to satisfy
\begin{equation}\label{eq:P}
(\Lambda_\eta(\lambda)^T\otimes I_n)M(\lambda)(\Lambda_\epsilon(\lambda)\otimes I_n) = P(\lambda).
\end{equation}
This motivates the following definition.
\begin{definition}[block Kronecker linearization]
Given an $n\times n$ matrix polynomial $P(\lambda)$ as in \eqref{eq:poly} of degree $d$, an $(\epsilon,n,\eta,n)$-block Kronecker pencil as in \eqref{eq:block Kronecker pencil} is called a \emph{block Kronecker linearization of $P(\lambda)$} if $d=\epsilon+\eta+1$ and the pencil $M(\lambda)$ satisfies \eqref{eq:P}.
\end{definition}

The set of block Kronecker linearizations is never empty, because \eqref{eq:P} is consistent for each matrix polynomial $P(\lambda)$.
Indeed, it is easy to check that the pencil
\begin{equation}\label{eq:M0}
M_0(\lambda):=\begin{bmatrix}
\lambda A_d+A_{d-1} & A_{d-2} & \cdots & A_\eta \\
0 & \cdots & 0 & \vdots \\
\vdots & \ddots & \vdots & A_1 \\
0 & \cdots & 0 & A_0
\end{bmatrix}
\end{equation}
is a solution of \eqref{eq:P}.

The pencil $M_0(\lambda)$ in \eqref{eq:M0} is not the only solution of \eqref{eq:P}.
We present in Theorem \ref{thm:characterization of solutions} two equivalent characterizations of the set of solution of \eqref{eq:P}.
\begin{theorem}\label{thm:characterization of solutions}
Let $P(\lambda)$ as in \eqref{eq:poly} be an $n\times n$ matrix polynomial of degree $d$, and let $\epsilon$ and $\eta$ be nonnegative integers such that $\epsilon+\eta+1=d$.
Then, the following conditions are equivalent.
\begin{itemize}
\item[\rm (i)] The pencil $M(\lambda)$ satisfies \eqref{eq:P}.
\item[\rm (ii)] The pencil $M(\lambda)$ is of the form
\[
M(\lambda)=M_0(\lambda)+B(L_\epsilon(\lambda)\otimes I_n)+(L_\eta(\lambda)^T\otimes I_n)C,
\]
for some matrices $B\in\mathbb{C}^{(\eta+1)n\times \epsilon n}$ and $C\in\mathbb{C}^{\eta n\times(\epsilon+1)n}$, where $M_0(\lambda)$ has been defined in \eqref{eq:M0}.
\item[\rm (iii)] If we consider the pencil $M(\lambda)=\lambda M_1+M_0$ as an $(\eta+1)\times( \epsilon+1)$ block pencil with $n\times n$ block entries, denoted by $[M(\lambda)]_{ij} = \lambda [M_1]_{ij}+[M_0]_{ij}$, then the pencil $M(\lambda)$  satisfies the anti-diagonal sum conditions
\[
\sum_{i+j=d+2-k}[M_1]_{ij}+\sum_{i+j=d+1-k}[M_0]_{ij}=A_k, \quad \mbox{for }k=0,1,\hdots,d.
\]
\end{itemize}
\end{theorem}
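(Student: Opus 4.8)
The plan is to prove the three equivalences by establishing a cycle, say (ii) $\Rightarrow$ (i) $\Rightarrow$ (iii) $\Rightarrow$ (ii), or alternatively to show directly that (i) $\Leftrightarrow$ (iii) and (ii) $\Leftrightarrow$ (iii), whichever produces the cleanest bookkeeping. The conceptual heart of the matter is that the left-hand side of \eqref{eq:P}, namely $(\Lambda_\eta(\lambda)^T\otimes I_n)M(\lambda)(\Lambda_\epsilon(\lambda)\otimes I_n)$, when expanded with $M(\lambda)=\lambda M_1+M_0$ written in $n\times n$ block form, becomes a double sum $\sum_{i,j}\lambda^{\eta-i+1}[M_1]_{ij}\lambda^{\epsilon-j+1}+\sum_{i,j}\lambda^{\eta-i+1}[M_0]_{ij}\lambda^{\epsilon-j+1}$ (with indices running over the block positions), and collecting the coefficient of $\lambda^k$ picks out exactly the anti-diagonal sums appearing in (iii). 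So the equivalence (i) $\Leftrightarrow$ (iii) is essentially a matter of carefully matching powers of $\lambda$, using $\epsilon+\eta+1=d$ to check that the exponents range over $0,\dots,d$ as required.

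First I would verify (i) $\Leftrightarrow$ (iii) by this direct expansion: write $\Lambda_\eta(\lambda)^T\otimes I_n = [\lambda^\eta I_n,\ \lambda^{\eta-1}I_n,\ \dots,\ I_n]$ and similarly for $\Lambda_\epsilon(\lambda)\otimes I_n$, substitute $M(\lambda)=\lambda M_1+M_0$ in block form, multiply out, and observe that the $n\times n$ coefficient of $\lambda^k$ on the left is precisely $\sum_{i+j=d+2-k}[M_1]_{ij}+\sum_{i+j=d+1-k}[M_0]_{ij}$ (after translating my chosen indexing convention into the one in the statement). Setting this equal to $A_k$ for every $k$ is exactly \eqref{eq:P}, so (i) and (iii) say the same thing. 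I should take care with the convention for which block index corresponds to which power of $\lambda$, since $\Lambda_k(\lambda)$ has $\lambda^k$ on top and $1$ at the bottom; this is the one place where an off-by-one slip is easy.

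Next I would handle (ii) $\Leftrightarrow$ (iii), or equivalently, given (i) $\Leftrightarrow$ (iii), it suffices to show (ii) $\Leftrightarrow$ (i). The inclusion (ii) $\Rightarrow$ (i) is immediate from Theorem \ref{thm:kronecker pencils} applied to $M_0(\lambda)$ together with the identities $(\Lambda_\eta(\lambda)^T\otimes I_n)(L_\eta(\lambda)^T\otimes I_n) = (\Lambda_\eta(\lambda)^T L_\eta(\lambda)^T)\otimes I_n = 0$ and $(L_\epsilon(\lambda)\otimes I_n)(\Lambda_\epsilon(\lambda)\otimes I_n) = 0$ — these hold because $L_k(\lambda)\Lambda_k(\lambda)=0$, a one-line computation from the bidiagonal shape of $L_k$. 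Hence the correction terms $B(L_\epsilon(\lambda)\otimes I_n)$ and $(L_\eta(\lambda)^T\otimes I_n)C$ contribute nothing to \eqref{eq:P}, so any $M(\lambda)$ of the form in (ii) satisfies (i). For the converse (i) $\Rightarrow$ (ii), I would argue by a dimension/degrees-of-freedom count: the affine space of pencils $M(\lambda)$ satisfying the linear constraints \eqref{eq:P} has a known dimension (the number of free block entries minus the number of scalar constraints imposed by the anti-diagonal sums), and the map $(B,C)\mapsto M_0(\lambda)+B(L_\epsilon(\lambda)\otimes I_n)+(L_\eta(\lambda)^T\otimes I_n)C$ has image inside that space; one then checks the image has the right dimension by identifying the kernel of this parametrization (pairs $(B,C)$ with $B(L_\epsilon\otimes I_n)+(L_\eta^T\otimes I_n)C=0$), which can be described explicitly. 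Alternatively, and perhaps more cleanly, one can prove (iii) $\Rightarrow$ (ii) constructively: given $M(\lambda)$ satisfying the anti-diagonal sums, peel off the discrepancy $M(\lambda)-M_0(\lambda)$ and show it lies in the span of the block-column and block-row operations, using that $M(\lambda)-M_0(\lambda)$ has all its anti-diagonal block sums (of the appropriate weighted form) equal to zero and that such pencils are exactly those of the form $B(L_\epsilon(\lambda)\otimes I_n)+(L_\eta(\lambda)^T\otimes I_n)C$.

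The main obstacle I anticipate is the converse direction, showing that every solution of \eqref{eq:P} arises from the parametrization in (ii) — i.e. that the two correction terms span the full solution space of the homogeneous problem $(\Lambda_\eta^T\otimes I_n)N(\lambda)(\Lambda_\epsilon\otimes I_n)=0$. This requires either a careful dimension count (keeping track of the overlap between row-type and column-type corrections, which accounts for the pencils expressible both ways) or an explicit recursive elimination argument on the block entries of $N(\lambda)=M(\lambda)-M_0(\lambda)$, proceeding anti-diagonal by anti-diagonal and using the vanishing weighted sums to successively choose the free entries of $B$ and $C$. Everything else — the two equivalences' ``easy'' directions and the power-matching in (i) $\Leftrightarrow$ (iii) — is routine linear algebra with Kronecker products, so I would present those compactly and spend the real effort on the spanning argument.
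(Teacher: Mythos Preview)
The paper does not give a self-contained proof of this theorem; its entire proof reads ``This is just \cite[Theorem 5.9]{ell-ifications-DPV} with $\ell=1$.'' So there is nothing to compare at the level of argument structure --- your proposal \emph{is} a proof, whereas the paper simply imports the result from elsewhere.

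Your outline is correct. A couple of remarks that may tighten it. The equivalence (i) $\Leftrightarrow$ (iii) is exactly the power-matching computation you describe, and your caution about the indexing convention for $\Lambda_k(\lambda)$ is well placed; once that is fixed it is purely mechanical. For (ii) $\Rightarrow$ (i), your use of $L_k(\lambda)\Lambda_k(\lambda)=0$ is the right one-line observation. For the converse (i) $\Rightarrow$ (ii), you flag the possible non-injectivity of the parametrization $(B,C)\mapsto B(L_\epsilon(\lambda)\otimes I_n)+(L_\eta(\lambda)^T\otimes I_n)C$ as the main obstacle, but in fact this map \emph{is} injective: writing out the $\lambda$-part and constant part of the image and setting both to zero forces, via a short anti-diagonal recursion together with the vanishing of the first and last block rows of $B$ and the first and last block columns of $C$, that $B=0$ and $C=0$. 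Hence the dimension count you sketch (parameter count $[(\eta+1)\epsilon+\eta(\epsilon+1)]n^2$ versus homogeneous-solution dimension $[2(\eta+1)(\epsilon+1)-(\epsilon+\eta+2)]n^2$, which are equal) closes the argument immediately, and you do not need the alternative constructive elimination. Either route is fine; the dimension count is shorter once injectivity is established.
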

\begin{proof}
This is just \cite[Theorem 5.9]{ell-ifications-DPV} with $\ell=1$.
\end{proof}

\begin{remark}
We recall that the Frobenius companion form \eqref{eq:Frob}, Fiedler pencils \cite{Fiedler} and generalized Fiedler pencils \cite{vectors} are, modulo permutations, examples of block Kronecker linearizations \cite{simplified,kronecker}.
\end{remark}

%

\subsection{Local right-sided factorizations for block Kronecker linearizations}
We show in this section that block Kronecker linearizations present factorizations as those introduced in Definition \ref{def:local fact}.

We begin  by introducing two rectangular block-Toeplitz matrix polynomials that play an important role in the following developments.
\begin{equation}\label{eq:Rk}
R_k(\lambda):=
\begin{bmatrix}
I_n & 0 & \cdots & 0 & 0 \\
\lambda I_n & \ddots & \ddots & \vdots & \vdots \\
\vdots & \ddots & I_n & 0 & \vdots \\
\lambda^{k-1}I_n & \cdots & \lambda I_n & I_n & 0 
\end{bmatrix}\in\mathbb{C}[\lambda]^{kn\times (k+1)n},
\end{equation}
and
\begin{equation}\label{eq:Sk}
S_k(\lambda):=
\begin{bmatrix}
0 & \lambda^{k-1}I_n & \lambda^{k-2}I_n & \cdots & I_n \\
\vdots & 0 & \lambda^{k-1}I_n & \ddots & \vdots \\
\vdots & \vdots & \ddots & \ddots & \lambda^{k-2}I_n \\
0 & 0 & \cdots & 0 & \lambda^{k-1}I_n
\end{bmatrix}\in\mathbb{C}[\lambda]^{kn\times (k+1)n}.
\end{equation}
For $k=0$, we follow the convention of defining both $R_0(\lambda)$ and $S_0(\lambda)$ as the empty matrix.

In Theorem \ref{thm:local fact Kronecker}, we establish two local right-sided factorizations for block Kronecker linearizations.
\begin{theorem}[local right-sided factorizations]\label{thm:local fact Kronecker}
Let $P(\lambda)$ be an $n\times n$ matrix polynomial as in \eqref{eq:poly} of degree $d$, and let $\mathcal{L}(\lambda)$  as in \eqref{eq:block Kronecker pencil} be a block Kronecker linearization of $P(\lambda)$.
Then, the following factorizations hold.
\begin{equation}\label{eq:right-sided1}
\mathcal{L}(\lambda) 
\begin{bmatrix}
\Lambda_\epsilon(\lambda)\otimes I_n \\
R_\eta(\lambda)M(\lambda)(\Lambda_\epsilon(\lambda)\otimes I_n)
\end{bmatrix} = e_{\eta+1}\otimes P(\lambda),
\end{equation}
and 
\begin{equation}\label{eq:right-sided2}
\mathcal{L}(\lambda) 
\begin{bmatrix}
\frac{1}{\lambda^{\epsilon}}(\Lambda_\epsilon(\lambda)\otimes I_n) \\
\frac{1}{\lambda^{d-1}}S_\eta(\lambda)M(\lambda)(\Lambda_\epsilon(\lambda)\otimes I_n)
\end{bmatrix} = \frac{1}{\lambda^{d-1}}e_{1}\otimes P(\lambda),
\end{equation}
where $R_k(\lambda)$ and $S_k(\lambda)$ are defined in \eqref{eq:Rk} and \eqref{eq:Sk}, respectively.
Moreover, the factorization \eqref{eq:right-sided1} is a right-sided factorization in $\mathbb{C}$, and the factorization \eqref{eq:right-sided2} is a right-sided factorization in $\mathbb{C}\setminus\{0\}$.
\end{theorem}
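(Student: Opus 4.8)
The plan is to verify the two identities \eqref{eq:right-sided1} and \eqref{eq:right-sided2} directly by block matrix multiplication, exploiting the very special sparse structure of the block Kronecker pencil \eqref{eq:block Kronecker pencil}, and then to check the rank and nonvanishing conditions that make these genuine right-sided factorizations in the sense of Definition \ref{def:local fact}. The two key algebraic facts I would establish first are the "telescoping" identities for $L_k(\lambda)$ and $R_k(\lambda)$: namely $(L_k(\lambda)\otimes I_n)(\Lambda_k(\lambda)\otimes I_n) = 0$ for all $\lambda$, and $(L_k(\lambda)\otimes I_n)R_k(\lambda) = I_{kn}$, together with the companion identity $R_k(\lambda)(L_k(\lambda)^T\otimes I_n)^{\!?}$-type relations — more precisely I would want $(L_\eta(\lambda)^T\otimes I_n)R_\eta(\lambda) = I - (\Lambda_\eta(\lambda)\otimes I_n)e_{\eta+1}^T\otimes I_n$ kind of identity, i.e. $R_\eta(\lambda)$ acts as a one-sided inverse. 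These are all immediate from the bidiagonal form of $L_k$ and the block-Toeplitz form of $R_k$, and amount to no more than writing out a couple of $3\times 3$ block products.

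With those in hand, the first factorization is checked row-block by row-block. Write the candidate right-sided factor as $H(\lambda) = \left[\begin{smallmatrix} \Lambda_\epsilon(\lambda)\otimes I_n \\ R_\eta(\lambda)M(\lambda)(\Lambda_\epsilon(\lambda)\otimes I_n)\end{smallmatrix}\right]$ and multiply by $\mathcal{L}(\lambda)$. The bottom block-row of $\mathcal{L}(\lambda)$ is $\bigl[L_\epsilon(\lambda)\otimes I_n \;\;\; 0\bigr]$, and applied to $H(\lambda)$ it gives $(L_\epsilon(\lambda)\otimes I_n)(\Lambda_\epsilon(\lambda)\otimes I_n) = 0$, which is exactly the bottom block of $e_{\eta+1}\otimes P(\lambda)$. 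The top block-row is $\bigl[M(\lambda) \;\;\; L_\eta(\lambda)^T\otimes I_n\bigr]$; applied to $H(\lambda)$ it yields $M(\lambda)(\Lambda_\epsilon(\lambda)\otimes I_n) + (L_\eta(\lambda)^T\otimes I_n)R_\eta(\lambda)M(\lambda)(\Lambda_\epsilon(\lambda)\otimes I_n)$, and here the $R_k$-telescoping identity collapses the sum to $e_{\eta+1}\otimes\bigl[(\Lambda_\eta(\lambda)^T\otimes I_n)M(\lambda)(\Lambda_\epsilon(\lambda)\otimes I_n)\bigr] = e_{\eta+1}\otimes P(\lambda)$, using \eqref{eq:P}. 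For the second factorization \eqref{eq:right-sided2} I would run the identical computation with $S_\eta(\lambda)$ in place of $R_\eta(\lambda)$ and the appropriate power of $\lambda$ pulled out; the role played by $R_\eta$ above is now played by $S_\eta$, and the analogous identity $(L_\eta(\lambda)^T\otimes I_n)S_\eta(\lambda) = \lambda^{\eta}\bigl(I - e_1 e_1^T\otimes I_n\bigr)$-type relation produces the $e_1$ and the $\lambda^{d-1}$ scaling after accounting for $d = \epsilon+\eta+1$.

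Finally I would address the full-column-rank and nonvanishing requirements. For \eqref{eq:right-sided1}, the factor $H(\lambda)$ has $\Lambda_\epsilon(\lambda)\otimes I_n$ as its top $(\epsilon+1)n$ rows, and since the last block-row of $\Lambda_\epsilon(\lambda)$ is $I_n$ (constant, never vanishing), $H(\lambda)$ has full column rank $n$ for every $\lambda\in\mathbb{C}$; the accompanying vector is $e_{\eta+1}$, which is nonzero for every $\lambda$. Hence \eqref{eq:right-sided1} is a right-sided factorization at $\Omega=\mathbb{C}$. For \eqref{eq:right-sided2}, the scaling by $1/\lambda^{\epsilon}$ (resp. $1/\lambda^{d-1}$) forces us to exclude $\lambda=0$; away from $0$ the same argument gives full column rank, and $e_1/\lambda^{d-1}$ is nonzero, so it is a right-sided factorization at $\Omega=\mathbb{C}\setminus\{0\}$. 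The only place where care is genuinely needed — and what I expect to be the main obstacle — is getting the $S_\eta$-identity and the bookkeeping of powers of $\lambda$ exactly right in \eqref{eq:right-sided2}, since there the Toeplitz band of $S_k$ is "reversed" compared with $R_k$ and one must track the index $d-1 = \epsilon+\eta$ through the product; everything in \eqref{eq:right-sided1} is essentially forced by the two telescoping identities and \eqref{eq:P}.
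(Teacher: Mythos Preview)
Your approach is correct and complete in outline. The paper itself does not prove this theorem in-line; its entire proof is the single line ``See \cite[Theorem 3.3]{conditioning Kronecker}.'' Your direct block-multiplication argument, driven by the identities $(L_k(\lambda)\Lambda_k(\lambda))\otimes I_n=0$ and $I_{(\eta+1)n}+(L_\eta(\lambda)^T\otimes I_n)R_\eta(\lambda)=(e_{\eta+1}\Lambda_\eta(\lambda)^T)\otimes I_n$ (and the analogous identity with $S_\eta$ producing $e_1\Lambda_\eta(\lambda)^T$ after scaling), is exactly the standard verification that the cited reference carries out, so there is nothing to compare beyond noting that you have supplied what the paper only points to.

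One small sharpening: the ``kind of identity'' you wrote for $R_\eta$ has the factors in the wrong order and the wrong sign; the precise scalar identity you want is $L_\eta(\lambda)^T\widehat R_\eta(\lambda)=e_{\eta+1}\Lambda_\eta(\lambda)^T-I_{\eta+1}$, where $\widehat R_\eta$ is the scalar template of $R_\eta$. With that in hand the top block-row collapses exactly as you describe via \eqref{eq:P}. For \eqref{eq:right-sided2} the companion identity is $L_\eta(\lambda)^T\widehat S_\eta(\lambda)=\lambda^{\eta}I_{\eta+1}-e_{1}\Lambda_\eta(\lambda)^T$ (watch the sign here; this is precisely the bookkeeping you flagged as the main obstacle), and combining it with $d-1=\epsilon+\eta$ gives the $e_1/\lambda^{d-1}$ on the right. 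Your rank and nonvanishing checks are fine; for \eqref{eq:right-sided2} it is the \emph{first} $n\times n$ block of $\lambda^{-\epsilon}\Lambda_\epsilon(\lambda)\otimes I_n$ that equals $I_n$, not the last, but the conclusion is the same.
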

\begin{proof}
See \cite[Theorem 3.3]{conditioning Kronecker}.
\end{proof}

\section{Eigenvector error bounds for polynomial eigenvalue problems}\label{sec:error-bounds}
We consider in this section a linearization $L(\lambda)$ of a matrix polynomial $P(\lambda)$ satisfying properties P1 and P2, as explained in Section \ref{sec:Matrix Polynomials}, and an approximate eigenpair $(\widetilde{\lambda}_0,\widetilde{x})$ of an exact eigenpair $(\lambda_0,x)$ of $P(\lambda)$.
Our aim is to bound the acute angle between the approximate and exact eigenvectors $\widetilde{x}$ and $x$.
Recall that the local factorization \eqref{eq:one-sided-fact} implies that the vector $v=H(\lambda_0)x$ is an eigenvector of $L(\lambda)$ with eigenvalue $\lambda_0$.
Following \cite{TN}, we will considered the pair
\[
(\widetilde{\lambda}_0,\widetilde{v}:=H(\widetilde{\lambda}_0)\widetilde{x})
\]
as an approximate eigenpair of $L(\lambda)$.
By invoking the theory for GEPs in Section \ref{sec:GEP}, we can quantify $\sin\angle(v,\widetilde{v})$, because $v=H(\lambda_0)x$ is an exact eigenvector of $L(\lambda)$.
Then, combining this with Theorem \ref{thm:bound-angle}, we can bound the sine of the acute angle between the approximate and exact eigenvectors $\widetilde{x}$ and $x$ in terms of the residual norm $\|P(\widetilde{\lambda}_0)\widetilde{x}\|_2$ and the separation of $\widetilde{\lambda}_0$ to the set of eigenvalues of the linearization $L(\lambda)$ other than $\lambda_0$.

\begin{theorem}\label{thm:right-eig-bounds}
Let $P(\lambda)$ as in \eqref{eq:poly} be an $n\times n$ matrix polynomial of degree $d$, and let $L(\lambda)$ be a linearization of $P(\lambda)$ satisfying properties {\rm P1} and {\rm P2}.
Let $(\lambda_0,x)$ and $(\widetilde{\lambda}_0,\widetilde{x})$ be, respectively, exact and approximate eigenpairs of $P(\lambda)$. 
Assume $x$ and $\widetilde{x}$ are  unit vectors, and $\widetilde{\lambda}_0\in\Omega$.
Suppose the linerization $L(\lambda)=A-\lambda B$ has a generalized Schur form
\[
Q^*BZ=\begin{bmatrix} \alpha & b^* \\ 0 & B_1 \end{bmatrix} \quad \mbox{and} \quad 
Q^*AZ=\begin{bmatrix} \beta & a^* \\ 0 & A_1 \end{bmatrix} \quad \mbox{with} \quad \lambda_0 = \frac{\beta}{\alpha},
\] 
where $Q$ and $Z$ are unitary, and  $A_1$ and $B_1$ are upper triangular.
Then,
\begin{equation}\label{eq:main-bound1}
\sin\angle(x,\widetilde{x}) \leq \frac{\|g(\widetilde{\lambda}_0)\|_2\|P(\widetilde{\lambda}_0)\widetilde{x}\|_2}{\mathrm{sep}\,(\widetilde{\lambda}_0,(A_1,B_1))}.
\end{equation}
\end{theorem}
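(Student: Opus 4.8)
The plan is to chain together the three ingredients already set up in the preliminaries: the residual identity of Theorem~\ref{thm:residual norm}, the GEP eigenvector bound of Theorem~\ref{thm:gap}, and the block-vector angle inequality of Theorem~\ref{thm:bound-angle}. First I would introduce the auxiliary vectors $v := H(\lambda_0)x$ and $\widetilde{v} := H(\widetilde{\lambda}_0)\widetilde{x}$. By Theorem~\ref{theorem:exact-eigenpair}, $v$ is an exact eigenvector of $L(\lambda)$ with eigenvalue $\lambda_0$ (here I use that $x$ is a unit vector, hence nonzero, so $v\neq 0$). Since $Q,Z$ furnish a generalized Schur form with $\lambda_0=\beta/\alpha$, we may assume (replacing $Z$'s first column if necessary, which does not change the Schur structure) that $z_1 = v/\|v\|_2$, so that the hypotheses of Theorem~\ref{thm:gap} are met with the pair $(\widetilde{\lambda}_0,\widetilde{v})$. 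Note $\widetilde{\lambda}_0$ is not an eigenvalue of $L(\lambda)$: it is the approximate eigenvalue, and the bound is vacuous otherwise since $\mathrm{sep}$ would be irrelevant — I would either assume this or observe the bound holds trivially when $\widetilde\lambda_0$ is an eigenvalue of $L$ by taking $\widetilde x=x$; cleaner to just assume $\widetilde\lambda_0\notin\Lambda(L)$ as in Theorem~\ref{thm:gap}.

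Next, apply Theorem~\ref{thm:gap} to get
\[
\sin\angle(v,\widetilde{v}) \leq \frac{\|L(\widetilde{\lambda}_0)\widetilde{v}\|_2}{\|\widetilde{v}\|_2\,\mathrm{sep}\,(\widetilde{\lambda}_0,(A_1,B_1))}.
\]
Then rewrite the numerator using property P1: evaluating \eqref{eq:one-sided-fact} at $\widetilde{\lambda}_0\in\Omega$ and multiplying by $\widetilde{x}$ gives $L(\widetilde{\lambda}_0)\widetilde{v} = L(\widetilde{\lambda}_0)H(\widetilde{\lambda}_0)\widetilde{x} = g(\widetilde{\lambda}_0)\otimes(P(\widetilde{\lambda}_0)\widetilde{x})$, so by $\|A\otimes B\|_2=\|A\|_2\|B\|_2$ we get $\|L(\widetilde{\lambda}_0)\widetilde{v}\|_2 = \|g(\widetilde{\lambda}_0)\|_2\,\|P(\widetilde{\lambda}_0)\widetilde{x}\|_2$ (equivalently invoke Theorem~\ref{thm:residual norm} directly). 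This yields
\[
\sin\angle(v,\widetilde{v}) \leq \frac{\|g(\widetilde{\lambda}_0)\|_2\,\|P(\widetilde{\lambda}_0)\widetilde{x}\|_2}{\|\widetilde{v}\|_2\,\mathrm{sep}\,(\widetilde{\lambda}_0,(A_1,B_1))}.
\]

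Finally, I would descend from the linearization eigenvectors to the polynomial eigenvectors via Theorem~\ref{thm:bound-angle}. Partition $v$ and $\widetilde{v}$ into $d$ blocks of size $n$; by property P2 the $i$-th block of $H(\lambda)$ is $I_n$ for some fixed $i$, so the $i$-th block of $v$ is $x$ and the $i$-th block of $\widetilde{v}$ is $\widetilde{x}$, both of which are unit vectors by hypothesis. Theorem~\ref{thm:bound-angle} then gives $\sin\angle(x,\widetilde{x}) \leq \min\{\|v\|_2,\|\widetilde{v}\|_2\}\,\sin\angle(v,\widetilde{v}) \leq \|\widetilde{v}\|_2\,\sin\angle(v,\widetilde{v})$. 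Combining with the previous display, the $\|\widetilde{v}\|_2$ factors cancel and \eqref{eq:main-bound1} follows. The only mildly delicate point — not really an obstacle — is the normalization $z_1 = v/\|v\|_2$ in the Schur form: one must note that scaling the eigenvector column of $Z$ by a unimodular phase preserves all the stated block-triangular structure, so Theorem~\ref{thm:gap} applies verbatim; everything else is bookkeeping with the two cited theorems.
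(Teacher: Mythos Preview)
Your proposal is correct and follows essentially the same route as the paper: define $v=H(\lambda_0)x$ and $\widetilde v=H(\widetilde\lambda_0)\widetilde x$, apply Theorem~\ref{thm:gap} to bound $\sin\angle(v,\widetilde v)$, convert the residual via Theorem~\ref{thm:residual norm}, and then use Theorem~\ref{thm:bound-angle} together with property~P2 to cancel the factor $\|\widetilde v\|_2$ and descend to $\sin\angle(x,\widetilde x)$. Your explicit remarks on the normalization $z_1=v/\|v\|_2$ and on the standing assumption $\widetilde\lambda_0\notin\Lambda(L)$ are points the paper's proof leaves implicit, but otherwise the arguments coincide.
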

\begin{proof}
Let $v:=H(\lambda_0)x$ and $\widetilde{v}:=H(\mu_0)\widetilde{x}$, and let us partition the vectors $v$ and $\widetilde{v}$ into $d$ blocks of size $n$ each, denoted by $v_j$ and $\widetilde{v}_j$, respectively.
Recall that property P2 implies $v_i=x$ and $\widetilde{v}_i=\widetilde{x}$, for some $i\in\{ 1,\hdots,d \}$. 
Then, from Theorem \ref{thm:bound-angle}, we have 
\[
\sin\angle(x,\widetilde{x})\leq \min\{\| v \|_2,\| \widetilde{v}\|_2 \}\sin\angle(v,\widetilde{v})\leq \| \widetilde{v} \|_2 \sin\angle(v,\widetilde{v}).
\]
Since the pair $(\lambda_0,v)$ is an exact eigenpair of $L(\lambda)$, we obtain from Theorem \ref{thm:gap}
\[
 \sin\angle(v,\widetilde{v})\leq \frac{\| L(\widetilde{\lambda}_0)\widetilde{v} \|_2}{\|\widetilde{v}\|_2\,\mathrm{sep}\,(\widetilde{\lambda}_0,(A_1,B_1))}.
\]
From Lemma \ref{thm:residual norm}, we finally get
\begin{align*}
\sin\angle(x,\widetilde{x})\leq   \| \widetilde{v} \|_2 \sin\angle(v,\widetilde{v}) \leq & \frac{\| L(\widetilde{\lambda}_0)\widetilde{v} \|_2}{\mathrm{sep}\,(\widetilde{\lambda}_0,(A_1,B_1))} = 
 \frac{ \|g(\widetilde{\lambda}_0)\|_2 \| P(\widetilde{\lambda}_0)\widetilde{x}\|_2}{\mathrm{sep}\,(\widetilde{\lambda}_0,(A_1,B_1))},
\end{align*}
as we wanted to show.
\end{proof}

In the following section, we particularize Theorem \ref{thm:right-eig-bounds} to the case when $L(\lambda)$ is a block Kronecker linearization.

\section{Eigenvector error bounds for polynomial eigenvalue problems from block Kronecker companion linearizations}\label{sec:error-bounds-BKL}

 Nakatsukasa and Tisseur obtained  as a corollary of their theory the following eigenvector error bound for the Frobenius companion form \eqref{eq:Frob}.
\begin{theorem}[\cite{TN}]
Under the assumptions of Theorem \ref{thm:right-eig-bounds}, if the linearization $L(\lambda)=A-\lambda B$ is the Frobenius companion form \eqref{eq:Frob}, then
\begin{equation}\label{eq:bound for Frobenius form}
\sin\angle(x,\widetilde{x})\leq \frac{\|P(\widetilde{\lambda}_0)\widetilde{x}\|_2}{\sqrt{\sum_{i=0}^{d-1}|\widetilde{\lambda}_0|^{2i}}\cdot\mathrm{sep}(\widetilde{\lambda}_0,(A_1,B_1))}.
\end{equation}
\end{theorem}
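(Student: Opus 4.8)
The plan is to imitate the proof of Theorem~\ref{thm:right-eig-bounds}, but to replace its one lossy ingredient --- the block-extraction inequality of Theorem~\ref{thm:bound-angle} --- by an exact angle computation that is available precisely because the Frobenius companion form has eigenvectors of rank-one Kronecker (Vandermonde) type. First I would record the relevant one-sided factorization: the Frobenius companion form \eqref{eq:Frob} is the block Kronecker linearization with $\epsilon=d-1$, $\eta=0$ and $M(\lambda)=M_0(\lambda)$ as in \eqref{eq:M0}, so Theorem~\ref{thm:local fact Kronecker} (with $R_0(\lambda)$ the empty matrix) specializes to
\[
\mathcal L(\lambda)\bigl(\Lambda_{d-1}(\lambda)\otimes I_n\bigr)=e_1\otimes P(\lambda)\qquad\text{on }\mathbb{C},
\]
which is just the degree-$d$ analogue of \eqref{eq:ex-factorization1}. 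Hence P1 holds with $H(\lambda)=\Lambda_{d-1}(\lambda)\otimes I_n$ and $g(\lambda)=e_1$, and P2 holds with block index $i=d$ (the last block of $H(\lambda)$ is $I_n$). By Theorem~\ref{theorem:exact-eigenpair} the exact eigenpair of $\mathcal L(\lambda)$ to be used in Theorem~\ref{thm:gap} is $(\lambda_0,v)$ with $v=H(\lambda_0)x=\Lambda_{d-1}(\lambda_0)\otimes x$, and following \cite{TN} the approximate eigenpair is $(\widetilde\lambda_0,\widetilde v)$ with $\widetilde v=H(\widetilde\lambda_0)\widetilde x=\Lambda_{d-1}(\widetilde\lambda_0)\otimes\widetilde x$.

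The key step is an elementary Kronecker identity. For nonzero vectors $a,b$ and $x,y$ one has $(b\otimes y)^*(a\otimes x)=(b^*a)(y^*x)$ and $\|a\otimes x\|_2=\|a\|_2\|x\|_2$, whence $\cos\angle(a\otimes x,\,b\otimes y)=\cos\angle(a,b)\,\cos\angle(x,y)$. Applying this with $a=\Lambda_{d-1}(\lambda_0)$ and $b=\Lambda_{d-1}(\widetilde\lambda_0)$ gives
\[
\cos\angle(v,\widetilde v)=\cos\angle\bigl(\Lambda_{d-1}(\lambda_0),\Lambda_{d-1}(\widetilde\lambda_0)\bigr)\,\cos\angle(x,\widetilde x)\le\cos\angle(x,\widetilde x),
\]
and therefore, since all acute angles lie in $[0,\pi/2]$, $\sin\angle(x,\widetilde x)\le\sin\angle(v,\widetilde v)$. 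This is where the structure pays off: Theorem~\ref{thm:bound-angle} applied to the block $i=d$ would only yield $\sin\angle(x,\widetilde x)\le\|\widetilde v\|_2\sin\angle(v,\widetilde v)$, and the factor $\|\widetilde v\|_2=\sqrt{\sum_{i=0}^{d-1}|\widetilde\lambda_0|^{2i}}$ is exactly what would annihilate the gain obtained next.

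To conclude, apply Theorem~\ref{thm:gap} to $\mathcal L(\lambda)=A-\lambda B$ with the exact eigenpair $(\lambda_0,v)$ and the pair $(\widetilde\lambda_0,\widetilde v)$ (with $\widetilde\lambda_0$ not an eigenvalue of $\mathcal L$, as in Theorem~\ref{thm:right-eig-bounds}):
\[
\sin\angle(v,\widetilde v)\le\frac{\|\mathcal L(\widetilde\lambda_0)\widetilde v\|_2}{\|\widetilde v\|_2\,\mathrm{sep}(\widetilde\lambda_0,(A_1,B_1))}.
\]
By Theorem~\ref{thm:residual norm} (i.e.\ by evaluating the factorization above at $\widetilde\lambda_0$ and multiplying by $\widetilde x$), $\|\mathcal L(\widetilde\lambda_0)\widetilde v\|_2=\|g(\widetilde\lambda_0)\|_2\|P(\widetilde\lambda_0)\widetilde x\|_2=\|P(\widetilde\lambda_0)\widetilde x\|_2$ since $g(\widetilde\lambda_0)=e_1$; and $\|\widetilde v\|_2=\|\Lambda_{d-1}(\widetilde\lambda_0)\|_2\|\widetilde x\|_2=\sqrt{\sum_{i=0}^{d-1}|\widetilde\lambda_0|^{2i}}$ because $\widetilde x$ is a unit vector. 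Chaining $\sin\angle(x,\widetilde x)\le\sin\angle(v,\widetilde v)$ with these two facts gives \eqref{eq:bound for Frobenius form}.

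The main obstacle is conceptual rather than computational: a blind specialization of Theorem~\ref{thm:right-eig-bounds} to the Frobenius companion form only produces $\sin\angle(x,\widetilde x)\le\|P(\widetilde\lambda_0)\widetilde x\|_2/\mathrm{sep}(\widetilde\lambda_0,(A_1,B_1))$, since Theorem~\ref{thm:bound-angle} is sharp only up to the factor $\|\widetilde v\|_2$. Recovering the sharper denominator $\sqrt{\sum_{i=0}^{d-1}|\widetilde\lambda_0|^{2i}}$ requires exploiting that the eigenvectors of the linearization are exactly the rank-one tensors $\Lambda_{d-1}(\lambda)\otimes x$ --- a feature of companion-type pencils that a general linearization satisfying only P1 and P2 need not possess. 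Everything else (the identification of the factorization, the norm of $\Lambda_{d-1}$, and the Kronecker inner-product/norm identities) is routine.
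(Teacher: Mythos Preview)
Your argument is correct. The paper itself does not supply a proof of this theorem: it is quoted from \cite{TN} (``Nakatsukasa and Tisseur obtained as a corollary of their theory the following eigenvector error bound\dots'') and then immediately compared with the paper's own Theorem~\ref{thm:bound Kronecker}. So there is no in-paper proof to benchmark against.

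That said, your derivation fits cleanly into the paper's framework and is worth recording. You correctly identify the Frobenius companion form as the block Kronecker linearization with $\epsilon=d-1$, $\eta=0$, so that the right-sided factor is $H(\lambda)=\Lambda_{d-1}(\lambda)\otimes I_n$ and $g(\lambda)=e_1$. The crucial observation is the one the paper itself hints at in the introduction (``in contrast to the bounds in \cite{TN}, our error bounds \dots do not require the Vandermonde-like structure of the eigenvectors of the linearization''): because both $v$ and $\widetilde v$ are rank-one Kronecker products, the identity
\[
\cos\angle(a\otimes x,\,b\otimes y)=\cos\angle(a,b)\,\cos\angle(x,y)
\]
yields $\sin\angle(x,\widetilde x)\le\sin\angle(v,\widetilde v)$ \emph{exactly}, whereas the general Theorem~\ref{thm:bound-angle} only gives $\sin\angle(x,\widetilde x)\le\|\widetilde v\|_2\sin\angle(v,\widetilde v)$. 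This is precisely the factor $\|\widetilde v\|_2=\sqrt{\sum_{i=0}^{d-1}|\widetilde\lambda_0|^{2i}}$ that survives in the denominator of \eqref{eq:bound for Frobenius form} after applying Theorem~\ref{thm:gap} and Theorem~\ref{thm:residual norm}. Your closing remark that a direct specialization of Theorem~\ref{thm:right-eig-bounds} would lose this factor (yielding only the bound of Theorem~\ref{thm:bound Kronecker}, off by at most $\sqrt d$) is exactly the distinction the paper draws between the two bounds in Section~\ref{sec:error-bounds-BKL}.
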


Armed with the local right-sided factorizations \eqref{eq:right-sided1} and \eqref{eq:right-sided2}, we can apply Theorem \ref{thm:right-eig-bounds} to any block Kronecker linearization (including the Frobenius companion form).

\begin{theorem}\label{thm:bound Kronecker}
Under the assumptions of Theorem \ref{thm:right-eig-bounds}, if the linearization $L(\lambda)=A-\lambda B$ is  a block Kronecker linearization, then
\begin{equation}\label{eq:main-bound2}
\sin\angle(x,\widetilde{x}) \leq \frac{\|P(\widetilde{\lambda}_0)\widetilde{x}\|_2}{\max\left\{1,|\widetilde{\lambda}_0|^{d-1}\right\}\cdot \mathrm{sep}\,(\widetilde{\lambda}_0,(A_1,B_1))}.
\end{equation}
\end{theorem}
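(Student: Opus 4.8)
The plan is to apply Theorem~\ref{thm:right-eig-bounds} directly, since a block Kronecker linearization satisfies properties P1 and P2 (the latter because the right-sided factors in Theorem~\ref{thm:local fact Kronecker} contain the block $\Lambda_\epsilon(\lambda)\otimes I_n$, whose bottom $n\times n$ block is $I_n$). The only thing left to do is to identify, for the two right-sided factorizations \eqref{eq:right-sided1} and \eqref{eq:right-sided2}, the corresponding vector-valued functions $g(\lambda)$ appearing in \eqref{eq:one-sided-fact}, compute $\|g(\widetilde{\lambda}_0)\|_2$, and substitute into the bound \eqref{eq:main-bound1}.

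First I would read off from \eqref{eq:right-sided1} that $g(\lambda)=e_{\eta+1}\in\mathbb{C}^{d}$, so $\|g(\widetilde{\lambda}_0)\|_2=1$; Theorem~\ref{thm:right-eig-bounds} then gives
\[
\sin\angle(x,\widetilde{x}) \leq \frac{\|P(\widetilde{\lambda}_0)\widetilde{x}\|_2}{\mathrm{sep}\,(\widetilde{\lambda}_0,(A_1,B_1))}.
\]
Next, from \eqref{eq:right-sided2}, which is valid on $\mathbb{C}\setminus\{0\}$, the relevant vector function is $g(\lambda)=\lambda^{-(d-1)}e_1$, so $\|g(\widetilde{\lambda}_0)\|_2=|\widetilde{\lambda}_0|^{-(d-1)}$ whenever $\widetilde{\lambda}_0\neq 0$, and Theorem~\ref{thm:right-eig-bounds} yields
\[
\sin\angle(x,\widetilde{x}) \leq \frac{\|P(\widetilde{\lambda}_0)\widetilde{x}\|_2}{|\widetilde{\lambda}_0|^{d-1}\cdot\mathrm{sep}\,(\widetilde{\lambda}_0,(A_1,B_1))}.
\]
Here one must be slightly careful: the matrix $H(\lambda)$ in \eqref{eq:right-sided2} differs from the one in \eqref{eq:right-sided1}, but its $i$-th block (with $i=\eta+1$, the block containing the tail of $\Lambda_\epsilon$) is still $\lambda^{-\epsilon}\cdot\lambda^{\epsilon}I_n$... wait — more precisely the factor $\lambda^{-\epsilon}(\Lambda_\epsilon(\lambda)\otimes I_n)$ has bottom block $\lambda^{-\epsilon}I_n$, which is not $I_n$. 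So for \eqref{eq:right-sided2} one should instead rescale: multiply the right-sided factor by $\lambda^{\epsilon}$ and $g$ by $\lambda^{\epsilon}$ as well, obtaining the factorization $\mathcal{L}(\lambda)[\,\Lambda_\epsilon(\lambda)\otimes I_n\,;\,\lambda^{-(d-1-\epsilon)}S_\eta(\lambda)M(\lambda)(\Lambda_\epsilon\otimes I_n)\,] = \lambda^{-(d-1-\epsilon)}e_1\otimes P(\lambda) = \lambda^{-(\eta+1)}e_1\otimes P(\lambda)$, whose right factor now has an $I_n$ block (the bottom block of $\Lambda_\epsilon\otimes I_n$) and whose $g$ has norm $|\widetilde{\lambda}_0|^{-(\eta+1)}$. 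Hmm, that changes the exponent — so the cleanest route is simply to note that \emph{any} scalar multiple of a right-sided factor/$g$ pair is again such a pair, and Theorem~\ref{thm:right-eig-bounds} is invariant under such rescaling provided P2 holds for the scaled factor; for \eqref{eq:right-sided2} the natural normalization making the $i$-th block equal to $I_n$ forces $g(\lambda)=\lambda^{-(d-1)}e_1$ after all, once one uses that $R_\eta$ versus $S_\eta$ changes which block is the identity only up to the overall power of $\lambda$. I would pin this down carefully.

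Finally, combining the two bounds, the denominator can be taken to be $\max\{1,|\widetilde{\lambda}_0|^{d-1}\}$: when $|\widetilde{\lambda}_0|\le 1$ use the first bound, when $|\widetilde{\lambda}_0|>1$ use the second, and the case $\widetilde{\lambda}_0=0$ is covered by the first. This gives \eqref{eq:main-bound2}. The main obstacle I anticipate is bookkeeping the exact power of $\lambda$ in $g(\lambda)$ for the second factorization after renormalizing the right-sided factor so that property P2 (an identity block in the $i$-th slot) is literally satisfied — getting the exponent $d-1$ rather than $\epsilon$, $\eta+1$, or $d$ requires matching the block index $i$ used in \eqref{eq:right-sided1} with the one forced by \eqref{eq:right-sided2} and checking they are consistent. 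Everything else is a direct citation of Theorems~\ref{thm:right-eig-bounds}, \ref{thm:local fact Kronecker}, and \ref{thm:residual norm}.
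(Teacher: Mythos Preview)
Your approach is exactly the paper's: apply Theorem~\ref{thm:right-eig-bounds} once with each of the factorizations \eqref{eq:right-sided1} and \eqref{eq:right-sided2} and keep the sharper bound. The obstacle you anticipate, however, is illusory. In the right-sided factor of~\eqref{eq:right-sided2}, look at the \emph{first} $n\times n$ block of $\lambda^{-\epsilon}(\Lambda_\epsilon(\lambda)\otimes I_n)$, not the last: it equals $\lambda^{-\epsilon}\cdot\lambda^{\epsilon}I_n=I_n$, so property~P2 is already satisfied with $i=1$ (whereas for~\eqref{eq:right-sided1} it holds with $i=\epsilon+1$). No rescaling is required, and $g(\lambda)=\lambda^{-(d-1)}e_1$ with $\|g(\widetilde{\lambda}_0)\|_2=|\widetilde{\lambda}_0|^{-(d-1)}$ drops out directly. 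Your worry about ``matching the block index $i$'' between the two factorizations is also unfounded: Theorem~\ref{thm:right-eig-bounds} is invoked independently for each factorization, each with its own $H$, $g$, $\Omega$, and $i$; nothing in its statement or proof ties the two applications together.
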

\begin{proof}
From Theorem \ref{thm:right-eig-bounds} and the right-sided factorization \eqref{eq:right-sided1}, we readily obtain the upper bound
\begin{equation}\label{eq:aux bound}
\sin\angle(x,\widetilde{x}) \leq \frac{\|P(\widetilde{\lambda}_0)\widetilde{x}\|_2}{\mathrm{sep}\,(\widetilde{\lambda}_0,(A_1,B_1))},
\end{equation}
for any pair $(\widetilde{\lambda}_0,\widetilde{x})$.
Now, assume $|\widetilde{\lambda}_0|\geq 1$. 
Since $\widetilde{\lambda}_0\neq 0$, from Theorem \ref{thm:right-eig-bounds} and the right-sided factorization \eqref{eq:right-sided2}, we  obtain the upper bound
\begin{equation}\label{eq:aux bound2}
\sin\angle(x,\widetilde{x}) \leq \frac{\|P(\widetilde{\lambda}_0)\widetilde{x}\|_2}{|\widetilde{\lambda}_0|^{d-1}\cdot \mathrm{sep}\,(\widetilde{\lambda}_0,(A_1,B_1))}.
\end{equation}
Combining \eqref{eq:aux bound} and \eqref{eq:aux bound2} gives the desired result.
\end{proof}

It is instructive to compare the Nakatsukasa and Tisseur upper bound for the Frobenius companion form
\[
\mathrm{Frobenius-bound} = \frac{\|P(\widetilde{\lambda}_0)\widetilde{x}\|_2}{\sqrt{\sum_{i=0}^{d-1}|\widetilde{\lambda}_0|^{2i}}\cdot\mathrm{sep}(\widetilde{\lambda}_0,(A_1,B_1))},
\]
with the upper bound in Theorem \ref{thm:bound Kronecker} in the case when $L(\lambda)$ is the Frobenius companion form
\[
\mathrm{Kronecker-bound} = \frac{\|P(\widetilde{\lambda}_0)\widetilde{x}\|_2}{\max\left\{1,|\widetilde{\lambda}_0|^{d-1}\right\}\cdot \mathrm{sep}\,(\widetilde{\lambda}_0,(A_1,B_1))}.
\]
Clearly, we have
\[
1 \leq \frac{\mathrm{Kronecker-bound}}{\mathrm{Frobenius-bound}} \leq \sqrt{d},
\]
and, so, our bound is within a $\sqrt{d}$ distance from the bound by Nakatsukasa and Tisseur.
Hence, it is natural to consider the upper bounds in Theorem \ref{thm:bound Kronecker}  as extensions of the Nakatsukasa and Tisseur upper bound \eqref{eq:bound for Frobenius form} for the Frobenius companion form to block Kronecker linearizations. 

\section{Numerical examples}\label{sec:experiments}

We illustrate the theory on some random matrix polynomials.
The experiments were performed in MATLAB 8, for which the unit roundoff is $2^{-53} \approx 10^{-16}$.
To compute the acute angle $\angle(x,\widetilde{x})$ and the separation \eqref{eq:def-sep}, we took as exact eigenvectors the ones computed in MATLAB's VPA arithmetic at 40 digit precision.
The $x$-axis in all our figures represents eigenvector index.
The eigenvectors are sorted always in increasing order of absolute value of their corresponding eigenvalues.

We construct two  random matrix polynomials $P_1(\lambda)=\sum_{i=0}^dA_i\lambda^i$ and $P_2(\lambda)=\sum_{i=0}^d B_i\lambda^i$, both with degree $d=5$ and size $n=10$. 
The polynomial $P_1(\lambda)$ has well-scaled eigenvalues, all of them of order $\mathcal{O}(1)$.
For $i=0,1,\hdots,5$, we construct the matrix coefficient $A_i$ with the MATLAB command
\[
A_i = {\tt randn(n)+sqrt(-1)*randn(n)};
\]
The polynomial $P_2(\lambda)$ has widely-scaled eigenvalues (the first fifth are $\mathcal{O}(10^{-4})$, the next two fifths are $\mathcal{O}(1)$, and the final two fifths are $O(10^4)$).
Its matrix coefficients $B_i$ are constructed as follows:
\begin{align*}
&B_0 = {\tt randn(n)+sqrt(-1)*randn(n);} \\
&B_1 = {\tt 1e4*(randn(n)+sqrt(-1)*randn(n));} \\ &B_2 ={\tt  1e-2*(randn(n)+sqrt(-1)*randn(n));} \\
&B_3 = {\tt 1e5*(randn(n)+sqrt(-1)*randn(n));} \\
&B_4 = {\tt randn(n)+sqrt(-1)*randn(n);} \\ &B_5  = {\tt 1e-1*(randn(n)+sqrt(-1)*randn(n));} 
\end{align*}
Then, the matrix polynomials $P_1(\lambda)$ and $P_2(\lambda)$ are scaled so that $\max_{i=0:d}\{\|A_i\|_2\}=\max_{i=0:d}\{\|B_i\|_2\}=1$.

The goal of the experiment is to show that the acute angle bound in Theorem \ref{thm:bound Kronecker} is experimentally tight, regardless of the linearization we use.
To verify this claim, we apply the bound \eqref{eq:main-bound2} to three block Kronecker companion linearizations: the Frobenius companion form
\begin{equation}\label{eq:expL1}
L_1(\lambda) =\begin{bmatrix}
A_4 & A_3 & A_2 & A_1 & A_0 \\
-I_n & 0 & 0 & 0 & 0 \\
0 & -I_n & 0 & 0 & 0 \\
0 & 0 & -I_n & 0 & 0 \\
0 & 0 & 0 & -I_n & 0
\end{bmatrix}-
\lambda \begin{bmatrix}
-A_5 & 0 & 0 & 0 & 0 \\
0 & -I_n & 0 & 0 & 0 \\
0 & 0 & -I_n & 0 & 0 \\
0 & 0 & 0 & -I_n & 0 \\
0 & 0 & 0 & 0 & -I_n
\end{bmatrix},
\end{equation}
the (permuted) Fiedler pencil
\begin{equation}\label{eq:expL2}
L_2(\lambda)=\begin{bmatrix}
A_4 & A_3 & A_2 & A_1 & -I_n \\
0 & 0 & 0 & A_0 & 0 \\
-I_n & 0 & 0 & 0 & 0 \\
0 & -I_n & 0 & 0 & 0 \\
0 & 0 & -I_n & 0 & 0 \\
\end{bmatrix}-\lambda\begin{bmatrix}
-A_5 & 0 & 0 & 0 & 0 \\
0 & 0 & 0 & 0 & - I_n\\
0 & -I_n & 0 & 0 & 0 \\
0 & 0 & -I_n & 0 & 0 \\
0 & 0 & 0 & -I_n & 0
\end{bmatrix},
\end{equation}
and the (permuted) generalized Fiedler pencil
\begin{equation}\label{eq:expL3}
L_3(\lambda) - \begin{bmatrix}
A_4 & 0 & 0 & -I_n & 0 \\
0 & A_2 & 0 & 0 & -I_n \\
0 & 0 & A_0 & 0 & 0 \\
-I_n & 0 & 0 & 0 & 0 \\
0 & -I_n & 0 & 0 & 0 
\end{bmatrix}-\lambda \begin{bmatrix}
-A_5 & 0 & 0 & 0 & 0 \\
0 & -A_3 & 0 & -I_n & 0 \\
0 & 0 & -A_1 & 0 & -I_n \\
0 & -I_n & 0 & 0 & 0 \\
0 & 0 & -I_n & 0 & 0
\end{bmatrix}.
\end{equation}

In Figures \ref{fig:L1}, \ref{fig:L2} and \ref{fig:L3} we plot the eigenvector errors (measured by the sine of the acute angle between computed and exact eigenvectors \eqref{eq:def-angle}) and the upper bound in Theorem \ref{thm:bound Kronecker}, for the polynomial $P_1(\lambda)$ (upper figures) and the polynomial $P_2(\lambda)$ (lower figures).
We observe that our bound was tight in all the experiments, off only by one order of magnitude.

\begin{figure}[h]
\centering
\includegraphics[height=6.5cm, width=13cm]{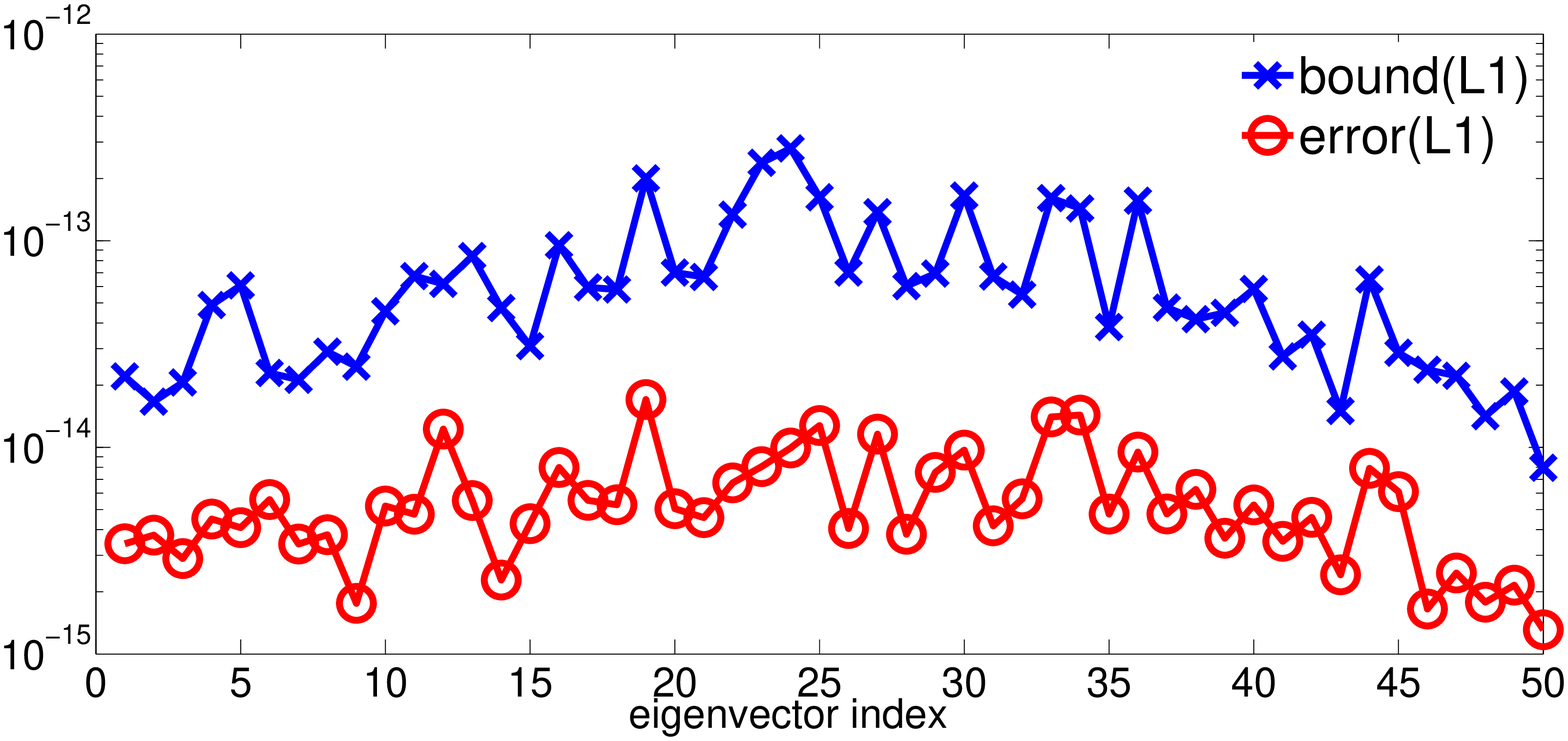}\\
\includegraphics[height=6.5cm, width=13cm]{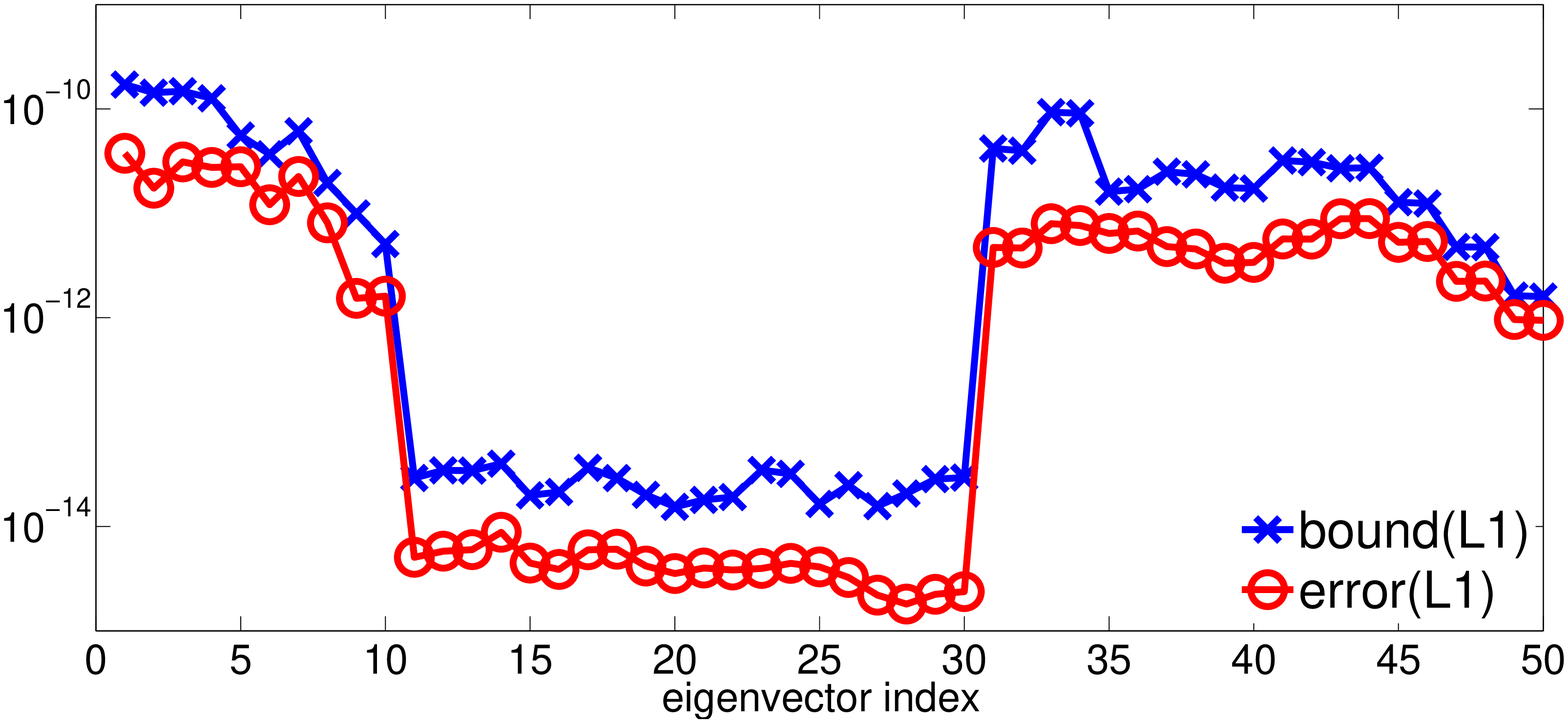}
\caption{Eigenvector error $\sin\angle(x,\widetilde{x})$ and the upper bound \eqref{eq:main-bound2} for the polynomials $P_1(\lambda)$ (upper figure) and $P_2(\lambda)$ (lower figure).
The linearization $L_1(\lambda)$ is as in \eqref{eq:expL1}.}
\label{fig:L1} 
\end{figure}

\begin{figure}[h]
\centering
\includegraphics[height=6.5cm, width=13cm]{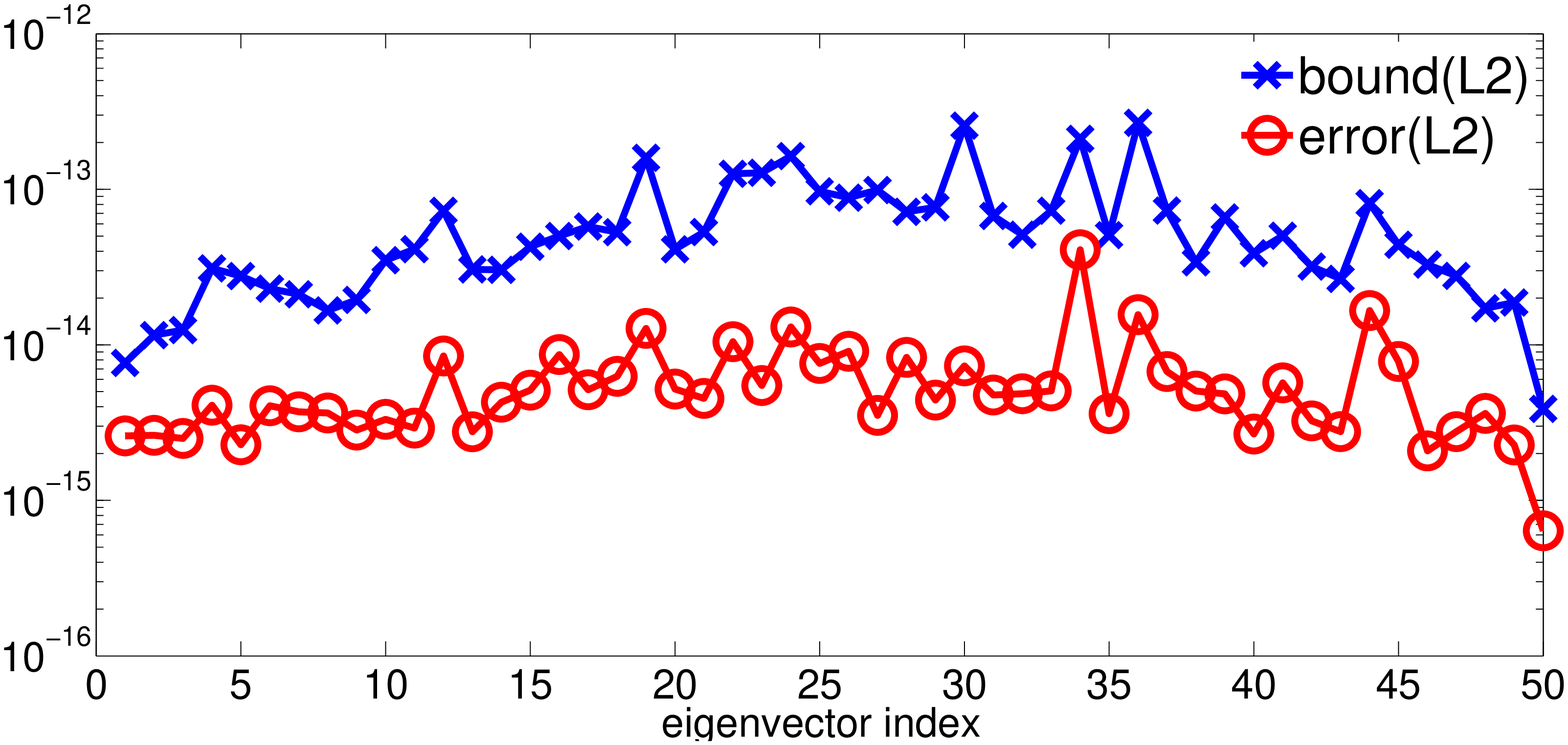} \\
\includegraphics[height=6.5cm, width=13cm]{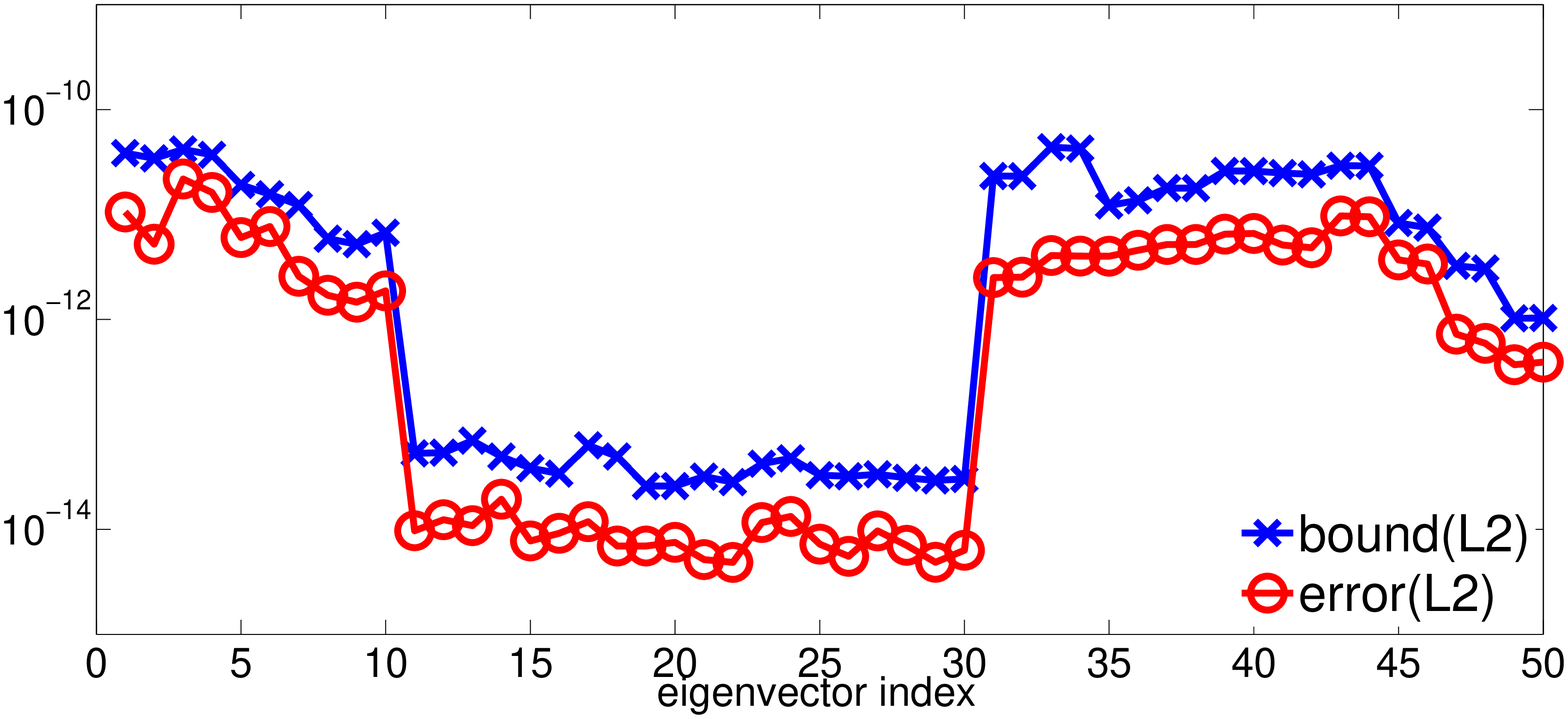} 
\caption{Eigenvector error $\sin\angle(x,\widetilde{x})$ and the upper bound \eqref{eq:main-bound2} for the polynomials $P_1(\lambda)$ (upper figure) and $P_2(\lambda)$ (lower figure).
The linearization $L_2(\lambda)$ is as in \eqref{eq:expL2}.} 
\label{fig:L2} 
\end{figure}

\begin{figure}[h]
\centering
\includegraphics[height=6.5cm, width=13cm]{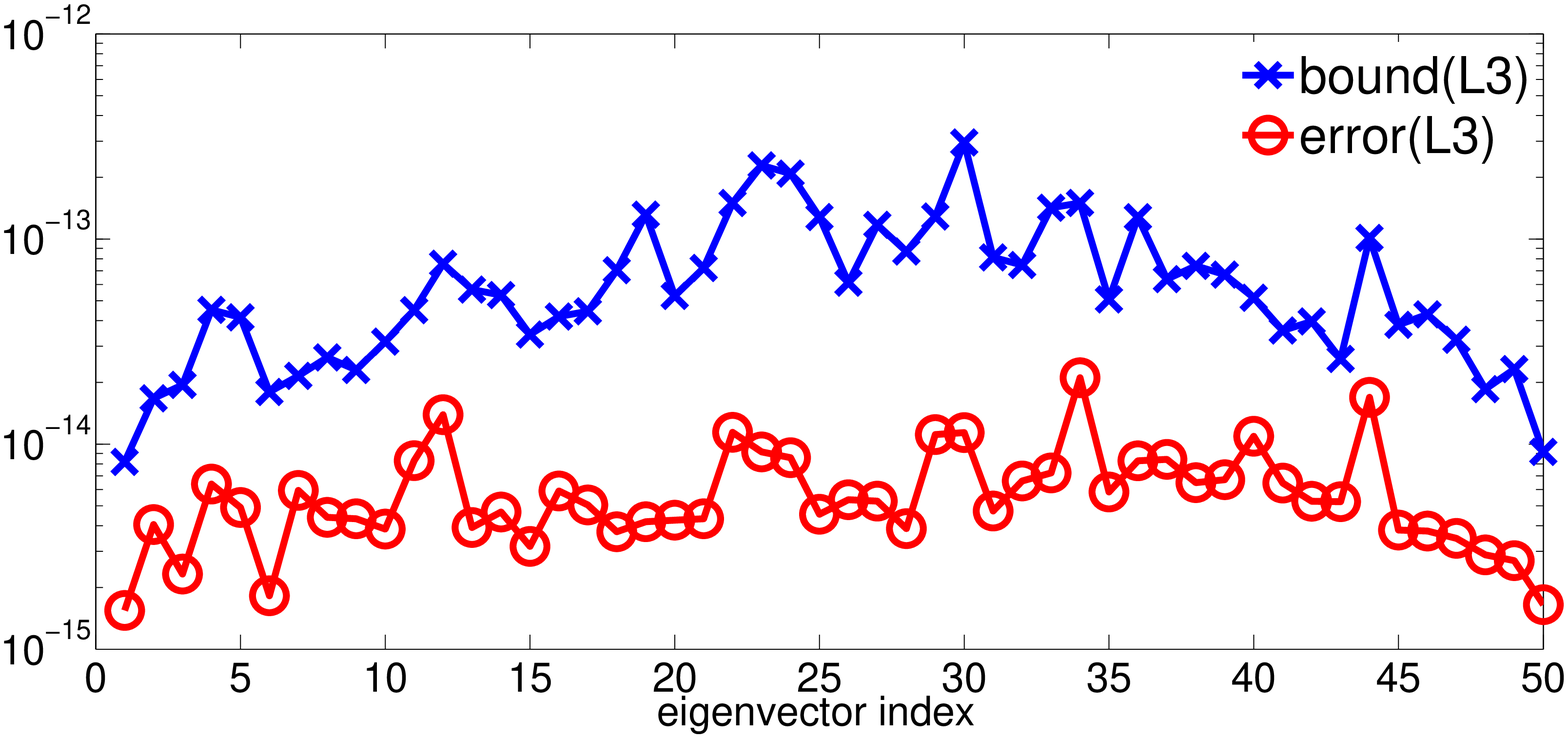} \\
\includegraphics[height=6.5cm, width=13cm]{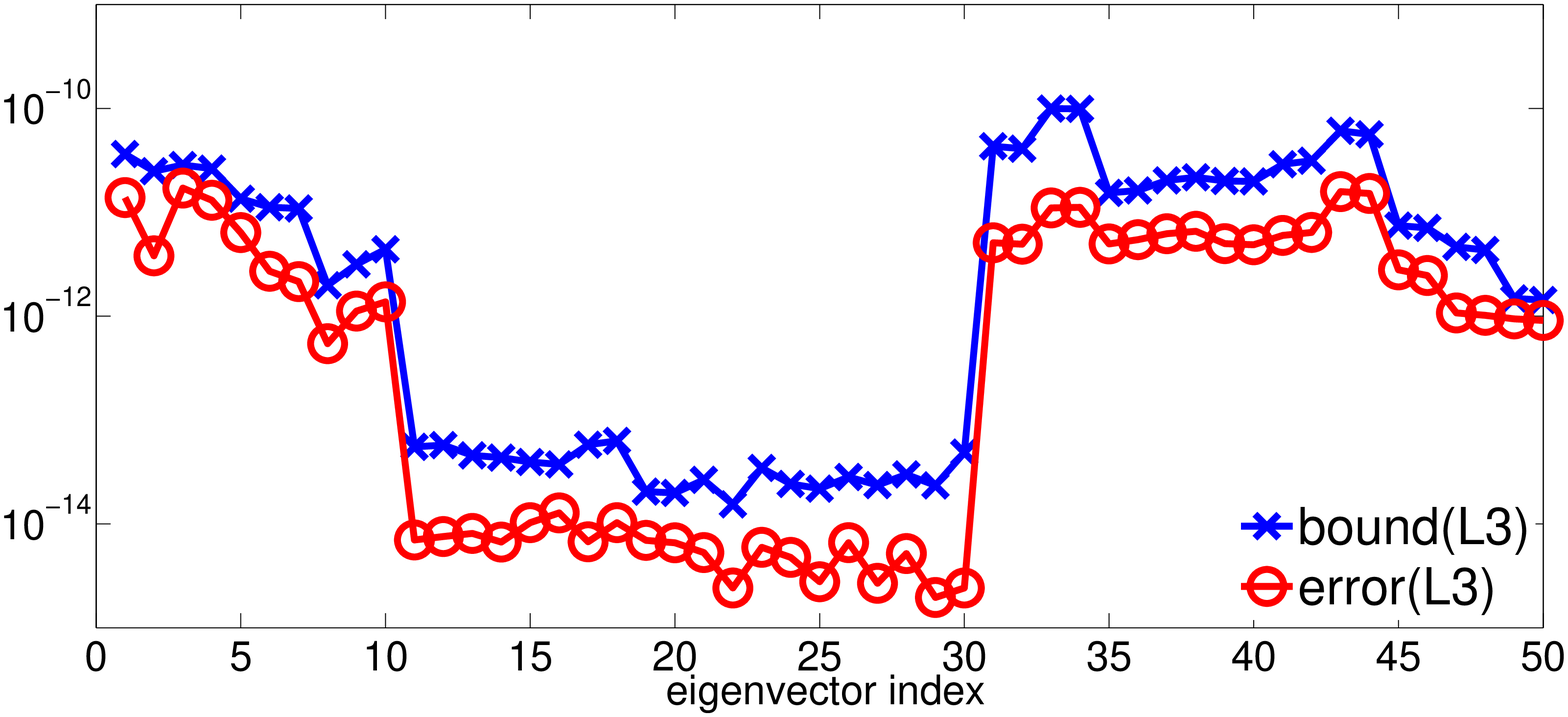}
\caption{Eigenvector error $\sin\angle(x,\widetilde{x})$ and the upper bound \eqref{eq:main-bound2} for the polynomials $P_1(\lambda)$ (upper figure) and $P_2(\lambda)$ (lower figure).
The linearization $L_3(\lambda)$ is as in \eqref{eq:expL3}.}
\label{fig:L3} 
\end{figure}

\end{document}